\newcommand{\calF}{\mathcal{F}}
\newcommand{\calO}{\mathcal{O}}
\newcommand{\calS}{\mathcal{S}}
\newcommand{\calV}{\mathcal{V}}
\newcommand{\scS}{\mathscr S}
\newcommand{\NN}{\mathbb{N}}
\newcommand{\PP}{\mathbb{P}}
\def\opn#1#2{\def#1{\operatorname{#2}}} % to make operators
\opn\rank{rank} \opn\mnull{null} \opn\Iso{Iso} \opn\Sw{Sw}
\opn\type{type}
\def\Qcoh{\operatorname{\mathsf{Qcoh}}}
\def\GrMod{\operatorname{\mathsf{GrMod}}}
\def\Tors{\operatorname{\mathsf{Tors}}}
\def\QGr{\operatorname{\mathsf{QGr}}}
\def\Projn{\operatorname{\mathsf{Proj_{nc}}}}
\def\e{\varepsilon}
\opn\Proj{Proj}
\def\rnum#1{\expandafter{\romannumeral #1}}
\def\Rnum#1{\uppercase\expandafter{\romannumeral #1}}
\newtheorem{thm}{Theorem}[section]
\newtheorem{cor}[thm]{Corollary}
\newtheorem{lem}[thm]{Lemma}
\newtheorem{prop}[thm]{Proposition}
\newtheorem{conj}[thm]{Conjecture}
\theoremstyle{definition}
\newtheorem{dfn}[thm]{Definition}
\newtheorem{ex}[thm]{Example}
\newtheorem{rem}[thm]{Remark}
\begin{document}

\title [Combinatorial classification of $(\pm 1)$-skew projective spaces]
{Combinatorial classification of \\ $(\pm 1)$-skew projective spaces}

%first author
\author{Akihiro Higashitani}
\address{Department of Pure and Applied Mathematics, 
Graduate School of Information Science and Technology, 
Osaka University, 
1-5, Yamadaoka, Suita, Osaka 565-0871, Japan}
\email{higashitani@ist.osaka-u.ac.jp}

%second author
\author{Kenta Ueyama}
\address{Department of Mathematics,
Faculty of Education,
Hirosaki University,
1 Bunkyocho, Hirosaki, Aomori 036-8560, Japan}
\email{k-ueyama@hirosaki-u.ac.jp}

\keywords{
$(\pm 1)$-skew projective space,
$(\pm 1)$-skew polynomial algebra,
noncommutative projective scheme,
point variety,
switching of graphs,
simplicial complex}

\subjclass[2020]{14A22, 16S38, 16W50, 05C76, 13F55}
%14A22: Noncommutative algebraic geometry
%16S38: Rings arising from noncommutative algebraic geometry
%16W50: Graded rings and modules (associative rings and algebras)
%05C76: Graph operations (line graphs, products, etc.)
%13F55: Stanley--Reisner face rings; simplicial complexes

\begin{abstract}
The noncommutative projective scheme $\operatorname{\mathsf{Proj_{nc}}} S$ of a $(\pm 1)$-skew polynomial algebra $S$ in $n$ variables is considered to be a $(\pm 1)$-skew projective space of dimension $n-1$.
In this paper, using combinatorial methods, we give a classification theorem for $(\pm 1)$-skew projective spaces.
Specifically, among other equivalences, we prove that $(\pm 1)$-skew projective spaces $\operatorname{\mathsf{Proj_{nc}}} S$ and $\operatorname{\mathsf{Proj_{nc}}} S'$ are isomorphic if and only if certain graphs associated to $S$ and $S'$ are switching (or mutation) equivalent.
We also discuss invariants of $(\pm 1)$-skew projective spaces from a combinatorial point of view.
\end{abstract}

\maketitle 

%\tableofcontents

\section{Introduction}
Since the noncommutative projective scheme of a Koszul AS-regular algebra is considered as a noncommutative projective space,
one of the most important projects in noncommutative algebraic geometry is the classification of Koszul AS-regular algebras.

Koszul AS-regular algebras of dimension $3$ were classified by Artin and Schelter \cite{AS}, 
and by Artin, Tate and Van~den~Bergh \cite{ATV} using a geometric approach, introducing the notion of a point scheme. 

On the other hand, the complete classification of Koszul AS-regular algebras of dimension $4$ or higher is a very difficult problem.
To address this problem, researchers have been studying many special classes of $4$-dimensional Koszul AS-regular algebras.
The most famous class is Sklyanin algebras of dimension $4$, whose ring-theoretic properties were studied by Smith and Stafford \cite{SS}.
Some variants of Sklyanin algebras were studied by Stafford \cite{Staf}.
Normal extensions of AS-regular algebras of dimension $3$ were studied by Le Bruyn, Smith and Van~den~Bergh \cite{LSV}.
Koszul AS-regular algebras of dimension $4$ having special point schemes or special line schemes were studied by
Shelton, Tingey, Vancliff, Van~Rompay, Willaert, etc.\ \cite{ST}, \cite{SV1}, \cite{SV2}, \cite{VVW}, \cite{Va}.
Moreover, it is known that graded skew Clifford algebras \cite{CV} and double Ore extensions \cite{ZZ1}, \cite{ZZ2} are very useful for constructing and studying $4$-dimensional Koszul AS-regular algebras.
In dimension $5$, AS-regular algebras that are quadratic but not Koszul were constructed by  Fl\o ystad and Vatne \cite{FV}, and by Li and Wang \cite{LW}.

The purpose of this paper is to classify a special class of $n$-dimensional Koszul AS-regular algebras, called standard graded $(\pm 1)$-skew polynomial algebras in $n$ variables.

Throughout, let $k$ be an algebraically closed field of characteristic not $2$.
\begin{dfn}\label{dfn:pm1}
A \emph{standard graded $(\pm 1)$-skew polynomial algebra} in $n$ variables is a graded algebra
\[ S_{\e} = k\langle x_1, \dots, x_n \rangle /(x_ix_j -\e_{ij} x_jx_i \mid 1\leq i,j \leq n) \]
where
\begin{itemize}
\item $\deg x_i=1$ for all $1\leq i \leq n$, and
\item $\e =(\e_{ij}) \in M_n(k)$ is a symmetric matrix such that $\e_{ii}=1$ for all $1\leq i \leq n$ and $\e_{ij}=\e_{ji} \in \{1, -1\}$ for all $1\leq i<j \leq n$.
\end{itemize}
The number $n$ is called the \emph{dimension of $S_\e$}, and is denoted by $\dim S_\e$.
\end{dfn}

The most significant reason for focusing on the study of $(\pm 1)$-skew polynomial algebras is that not only geometric but also combinatorial approaches work very effectively.

For a standard graded $(\pm 1)$-skew polynomial algebra $S_{\e}$ in $n$ variables,
\begin{enumerate}
\item let $G_\e$ be the graph associated to $S_{\e}$ defined in Definition \ref{dfn:g},
\item let $\Sw(G_\e)$ be the switching graph of $G_\e$ in the sense of Godsil and Royle \cite{GR} (see Definition \ref{dfn:sw}),
\item let $\overline{S_\e}$ be the standard graded $(\pm 1)$-skew polynomial algebra in $2n$ variables defined in Definition \ref{dfn:overline},
\item let $\GrMod S_\e$ be the category of graded right $S_\e$-modules with degree-preserving $S_\e$-module homomorphisms,
\item let $\Projn S_\e := (\QGr S_\e, \calS_\e)$ be the noncommutative projective scheme of $S_\e$ in the sense of Artin and Zhang \cite{AZ} (see Definition \ref{dfn:nps}), 
\item let $\Gamma_\e$ be the point variety of $S_\e$ in the sense of Artin, Tate and Van den Bergh \cite{ATV} (see Definition \ref{dfn:ps}, Remark \ref{rem:pv}), and
\item let $\Delta_\e$ be the simplicial complex associated to $S_{\e}$ defined in Definition \ref{dfn:Del}.
\end{enumerate}

Our main result is the following theorem.

\begin{thm}[Theorem \ref{thm:main}]\label{thm:i.main}
Let $S_{\e}$ and $S_{\e'}$ be standard graded $(\pm 1)$-skew polynomial algebras. Then the following are equivalent.
\begin{enumerate}
\item $G_\e$ and $G_{\e'}$ are switching (or mutation) equivalent.
\item $\Sw(G_\e)$ and $\Sw(G_{\e'})$ are isomorphic as graphs.
\item $\overline{S_\e}$ and $\overline{S_{\e'}}$ are isomorphic as graded algebras.
\item $\GrMod S_\e$ and $\GrMod S_{\e'}$ are equivalent as categories.
\item $\Projn S_\e$ and $\Projn S_{\e'}$ are isomorphic as noncommutative projective schemes.
\item $\Gamma_\e$ and $\Gamma_{\e'}$ are isomorphic as varieties.
\item $\Delta_\e$ and $\Delta_{\e'}$ are isomorphic as simplicial complexes.
\end{enumerate}
\end{thm}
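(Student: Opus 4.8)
The plan is to prove the theorem as a single cycle of implications, using (1) as the combinatorial hub and grouping the seven conditions into three blocks: the ``isomorphism-type'' conditions (1), (2), (3); the categorical--geometric conditions (4), (5), (6); and the simplicial condition (7). First I would dispatch the equivalences (1) $\Leftrightarrow$ (2) $\Leftrightarrow$ (3) purely combinatorially. The equivalence (1) $\Leftrightarrow$ (2) is the defining property of the switching graph in the sense of Godsil and Royle: $\Sw(G_\e)$ is built so that its isomorphism class is a complete invariant of the switching class of $G_\e$. For (2) $\Leftrightarrow$ (3) I would use that, by the definition of $\overline{S_\e}$, the algebra $\overline{S_\e}$ is itself a standard graded $(\pm1)$-skew polynomial algebra whose associated graph is exactly $\Sw(G_\e)$, together with the characterization (established earlier) of graded-algebra isomorphisms between $(\pm1)$-skew polynomial algebras in terms of isomorphisms of the associated graphs; this turns $\overline{S_\e}\cong\overline{S_{\e'}}$ verbatim into $\Sw(G_\e)\cong\Sw(G_{\e'})$.

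Next I would establish the direct equivalence (1) $\Leftrightarrow$ (7) by signed-graph theory, since this is the cleanest bridge from the combinatorics to the geometry and it closes the cycle. The key computation is that a subset $T\subseteq\{1,\dots,n\}$ is a face of $\Delta_\e$ precisely when the restriction $\e|_T$ is a coboundary, i.e.\ $\e_{ij}=\eta_i\eta_j$ for some $\eta\colon T\to\{\pm1\}$; this property is inherited by subsets, so $\Delta_\e$ is a genuine simplicial complex, and it is literally unchanged by switching because switching multiplies $\e$ by a coboundary. Conversely, the two-dimensional faces of $\Delta_\e$ record exactly the balanced triangles of the signed complete graph underlying $\e$, and by the fundamental theorem on switching of signed graphs (the switching class is determined by the set of balanced circuits, which for a complete graph reduces to balanced triangles) this collection is a complete invariant of the switching class. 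An isomorphism $\Delta_\e\cong\Delta_{\e'}$ therefore matches balanced triangles under a relabelling of vertices, which is precisely a switching equivalence of $G_\e$ and $G_{\e'}$.

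It remains to thread the categorical--geometric conditions into the cycle, which I would do as (1) $\Rightarrow$ (4) $\Rightarrow$ (5) $\Rightarrow$ (6) $\Rightarrow$ (7). For (1) $\Rightarrow$ (4) I would realize a single-vertex switch at $k$ as a Zhang twist: twisting $S_\e$ by the twisting system $\tau_n=\phi^n$, where $\phi$ is the graded automorphism fixing $x_i$ for $i\neq k$ and sending $x_k\mapsto -x_k$, produces exactly $S_{\e'}$ with $\e'$ the switch of $\e$ at $k$ (a direct check on the quadratic relations gives $\e'_{kj}=-\e_{kj}$ and $\e'_{ij}=\e_{ij}$ otherwise), and Zhang's twisting theorem yields $\GrMod S_\e\simeq\GrMod S_{\e'}$; iterating handles arbitrary vertex sets. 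For (4) $\Rightarrow$ (5) I would pass to the Serre quotient: a graded equivalence carries the torsion subcategory to the torsion subcategory and the distinguished object to the distinguished object, so it descends to an isomorphism of the pairs $(\QGr S_\e,\calS_\e)$ and $(\QGr S_{\e'},\calS_{\e'})$, i.e.\ an isomorphism $\Projn S_\e\cong\Projn S_{\e'}$. For (5) $\Rightarrow$ (6) I would invoke the Artin--Zhang and Artin--Tate--Van den Bergh theory: point modules, and hence the point variety $\Gamma_\e$, are characterised intrinsically inside $\Projn S_\e$, so an isomorphism of noncommutative projective schemes induces an isomorphism $\Gamma_\e\cong\Gamma_{\e'}$ of varieties. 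Finally, for (6) $\Rightarrow$ (7) I would use the explicit description of $\Gamma_\e$ as the arrangement of coordinate subspaces $\bigcup_{F}\PP(F)$ indexed by the facets $F$ of $\Delta_\e$, and recover $\Delta_\e$ from $\Gamma_\e$ by reading off the irreducible components, their dimensions (the facet cardinalities) and their incidences (the face lattice).

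The main obstacle I expect is the last step, (6) $\Rightarrow$ (7). The difficulty is that an abstract isomorphism of varieties is a priori free to move the coordinate subspaces and need not respect the linear-algebraic structure that encodes $\Delta_\e$; I must show that the combinatorial type of the coordinate-subspace arrangement $\Gamma_\e$ is an isomorphism invariant of the variety. This amounts to arguing that the irreducible components are exactly the maximal linear pieces, that their dimensions are intrinsic, and that the incidence pattern of their pairwise intersections reconstructs the face poset of $\Delta_\e$ --- in effect a Stanley--Reisner-type rigidity statement for these particular unions of projective linear subspaces. By contrast, the twisting step (1) $\Rightarrow$ (4) and the descent (4) $\Rightarrow$ (5) are routine once the correct twisting system is identified, and the recovery of point modules in (5) $\Rightarrow$ (6) is standard; the genuinely new content lies in closing the loop through the geometry back to the combinatorics.
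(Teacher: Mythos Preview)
Your overall architecture matches the paper's: the block $(1)\Leftrightarrow(2)\Leftrightarrow(3)$ via the switching-graph characterization and Proposition~\ref{prop:giso}, the chain $(1)\Rightarrow(4)\Rightarrow(5)\Rightarrow(6)\Rightarrow(7)$ via a Zhang twist, passage to the Serre quotient, Mori's theorem on point varieties, and the Stanley--Reisner correspondence, and then closing the loop with $(7)\Rightarrow(1)$. The one genuine difference is your argument for $(7)\Rightarrow(1)$: you observe that the $2$-faces of $\Delta_\e$ are exactly the balanced triangles of the signed complete graph underlying $\e$ and invoke Seidel's two-graph theorem (the switching class of a signing of $K_n$ is determined by its balanced triangles). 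The paper instead gives a self-contained construction (Theorem~\ref{thm:Deltom}): given a simplicial-complex isomorphism $f$, fix a vertex $u$, let $\scS$ be the set of vertices $v$ whose adjacency to $u$ in $G_\e$ disagrees with that of $f(v)$ to $f(u)$ in $G_{\e'}$, and verify by a case analysis on triples $\{u,v,w\}$ that $f$ is a graph isomorphism $G_\e^{\scS}\to G_{\e'}$. Your route is shorter and more conceptual but imports a nontrivial classical result; the paper's is longer but elementary and explicit. Your worry about $(6)\Rightarrow(7)$ is exactly what the paper handles in Lemma~\ref{lem:psiso}: an abstract isomorphism of the point varieties sends irreducible components to irreducible components and preserves the cardinalities $|\bigcap_{i\in I}X_i|$ for every $I$, which is enough to produce a permutation $\sigma\in\mathfrak{S}_n$ realising the bijection of facets --- precisely the rigidity argument you sketched.
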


Note that Mori and the second author \cite[Theorem 6.20]{MU} proved that if $n \leq 6$, then the equivalences $(1) \Leftrightarrow (4)\Leftrightarrow (6)$ hold. Theorem \ref{thm:i.main} is a generalization of this work.

A noteworthy equivalence is $(1) \Leftrightarrow (5)$. 
If we consider the case $\e_{ij}=1$ for all $1 \leq i,j \leq n$,
then $S_\e = k[x_1, \cdots, x_n]$,
so it follows from Serre's theorem that
there exists an equivalence $F: \Qcoh \PP^{n-1} \to \QGr S_\e$ such that $F(\calO_{\PP^{n-1}}) \cong \calS_\e$,
where $\Qcoh \PP^{n-1}$ is the category of quasi-coherent sheaves on $\PP^{n-1}$ and $\calO_{\PP^{n-1}}$ is the structure sheaf.
Therefore, if $S_\e$ is a general standard graded $(\pm 1)$-skew polynomial algebra in $n$ variables,
then $\Projn S_\e$ can be considered as a $(\pm 1)$-skew analogue of $\PP^{n-1}$.
For this reason, we call it a \emph{$(\pm 1)$-skew projective space} of dimension $n-1$.
Thanks to $(1) \Leftrightarrow (5)$, we can completely classify $(\pm 1)$-skew projective spaces by a purely combinatorial operation on graphs, called switching.
In particular, we have the following consequence.

\begin{cor}
Let $a_n$ be the number of the switching equivalence classes of graphs on $n$ nodes.
Then the number of the isomorphism classes of $(\pm 1)$-skew projective spaces of dimension $n-1$ is equal to $a_n$.
\end{cor}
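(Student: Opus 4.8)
The plan is to deduce the corollary directly from the equivalence $(1) \Leftrightarrow (5)$ of Theorem \ref{thm:i.main}, once we have set up the correct dictionary between skew projective spaces and graphs. The heavy lifting has already been done in the theorem, so what remains is to pass from the statement about individual algebras to a statement about equivalence classes, and then to count.

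First I would observe that the assignment $\e \mapsto G_\e$ of Definition \ref{dfn:g} is a bijection between the standard graded $(\pm 1)$-skew polynomial algebras in $n$ variables and the graphs on the $n$ nodes $\{1, \dots, n\}$. Indeed, such an algebra $S_\e$ is completely determined by its defining matrix $\e = (\e_{ij})$, and since $\e$ is symmetric with $\e_{ii} = 1$, it is encoded precisely by its $\binom{n}{2}$ off-diagonal entries $\e_{ij} \in \{1, -1\}$ with $i < j$; recording for each pair $\{i,j\}$ whether it is an edge recovers a graph on $n$ nodes, and every graph arises in this way. Hence $\e \mapsto G_\e$ is a bijection onto the set of all (labeled) graphs on $n$ nodes, and distinct matrices give distinct graphs.

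Next, by definition a $(\pm 1)$-skew projective space of dimension $n-1$ is an object of the form $\Projn S_\e$, and two of them, $\Projn S_\e$ and $\Projn S_{\e'}$, are isomorphic exactly when condition $(5)$ of Theorem \ref{thm:i.main} holds. By the equivalence $(5) \Leftrightarrow (1)$, this occurs if and only if $G_\e$ and $G_{\e'}$ are switching equivalent. Combining this with the bijection above, the map $\e \mapsto G_\e$ descends to a bijection between the isomorphism classes of $(\pm 1)$-skew projective spaces of dimension $n-1$ and the switching equivalence classes of graphs on $n$ nodes. Since $a_n$ is by definition the number of the latter, the number of isomorphism classes of such skew projective spaces equals $a_n$.

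There is no genuine obstacle here, as the corollary is in essence a restatement of $(1) \Leftrightarrow (5)$ at the level of equivalence classes. The only points demanding care are routine: confirming that $\e \mapsto G_\e$ is truly a bijection onto \emph{all} graphs on $n$ nodes, so that no class is missed and none is counted twice, and confirming that the relation called ``switching equivalent'' in condition $(1)$ is literally the relation whose classes are enumerated by $a_n$. Because the same terminology is used in both places and isomorphism of noncommutative projective schemes already permits a relabeling of the variables, these conventions are aligned, and the count follows at once.
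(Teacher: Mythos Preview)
Your proof is correct and matches the paper's approach exactly: the corollary is stated in the introduction as an immediate consequence of the equivalence $(1) \Leftrightarrow (5)$ in Theorem~\ref{thm:i.main}, with no further argument given. Your additional care in spelling out that $\e \mapsto G_\e$ is a bijection onto all labeled graphs on $n$ nodes and that the equivalence relations line up simply makes explicit what the paper leaves implicit.
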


By \cite{MS}, we see that the values of $a_n$ are
\[
\renewcommand{\arraystretch}{1.2}
\begin{array} {c|ccccccccccccccc}
n &1&2&3&4&5&6&7&8&9&10&11&12&13&\cdots \\ \hline
a_n&1&1&2&3&7&16&54&243&2038&33120&1182004&87723296&12886193064&\cdots 
\end{array}
\]
(A002854 of OEIS \cite{OE}).

Note that our results depend heavily on the setting ``$(\pm 1)$-skew''.
For example, $(6) \Rightarrow (4)$ does not hold for general skew (not necessarily $(\pm 1)$-skew) polynomial algebras (see Remark \ref{rem:gsp}).
Moreover, the number of the isomorphism classes of point varieties for $(\pm 1)$-skew polynomial algebras in $n$ variables (which is equal to $a_n$) is different from the one for general skew polynomial algebras in $n$ variables (see Remark \ref{rem:psd}).

As a further remark on Theorem \ref{thm:i.main}, we mention that $(4) \Leftrightarrow (5)$ gives a partial affirmative answer to Vitoria's conjecture (see Conjecture \ref{conj:vi}).

In the last section, we discuss some invariants for the classification obtained in Theorem \ref{thm:i.main}.
Namely, we investigate invariants, such as dimension and type, of point varieties $\Gamma_\e$ using combinatorial methods.
(Note that invariants of point varieties $\Gamma_\e$ can be viewed as invariants of $(\pm 1)$-skew projective spaces $\Projn S_\e$ by Theorem \ref{thm:i.main}.)
We prove the following result.

\begin{thm}[{Corollary \ref{cor:dim}}]
Let $S_{\e}$ be a standard graded $(\pm 1)$-skew polynomial algebra.
Then the following numbers are equal:
\begin{enumerate}
\item the dimension of $\Gamma_\e$,
\item the dimension of $\Delta_\e$,
\item $\max\{ |F|-1 \mid$$F$ is a disjoint union of two cliques of $G_\e$ that are not connected by an edge$\}$,
\item the clique number of $\Sw(G_\e)$ minus one.
\end{enumerate}
\end{thm}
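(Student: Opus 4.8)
\emph{Strategy.} The plan is to show that all four numbers coincide with the single combinatorial quantity
\[
d \;:=\; \max\bigl\{\,|F|-1 : F\subseteq\{1,\dots,n\},\ G_\e|_F\text{ is a disjoint union of two cliques joined by no edge}\,\bigr\},
\]
where one of the two cliques is permitted to be empty (so that single cliques of $G_\e$ are included). The point common to all four items is the elementary observation that such an $F$ is exactly a set on which $\e$ is a ``coboundary'': $G_\e|_F$ is a disjoint union of two cliques with no connecting edge if and only if there is a sign vector $a\colon F\to\{1,-1\}$ with $\e_{ij}=a_ia_j$ for all $i,j\in F$, the two sign classes $a^{-1}(1)$ and $a^{-1}(-1)$ being precisely the two cliques. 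I would establish this reformulation first, since it is what ties $F$ simultaneously to $\Delta_\e$, to $\Gamma_\e$, and to $\Sw(G_\e)$.

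\emph{The equalities $(3)=(2)=(1)$.} By the definition of $\Delta_\e$ (equivalently, by its combinatorial description in terms of $G_\e$), a subset $F$ is a face of $\Delta_\e$ precisely when $G_\e|_F$ is a disjoint union of two cliques with no edge between them; hence $\dim\Delta_\e=\max\{|F|-1 : F\in\Delta_\e\}=d$, which is $(3)=(2)$. For $(1)=(2)$ I would invoke the explicit description of the point variety established earlier, namely that $\Gamma_\e$ is the union of the coordinate projective subspaces $\PP_F:=\{[x]\in\PP^{n-1}:x_i=0\text{ for }i\notin F\}$ taken over the facets $F$ of $\Delta_\e$. Since the dimension of a variety is the maximum of the dimensions of its irreducible components and $\dim\PP_F=|F|-1$, this gives $\dim\Gamma_\e=\max_F(|F|-1)=\dim\Delta_\e$.

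\emph{The equality $(4)=d$.} Here I would analyse the cliques of the $2n$-vertex graph $\Sw(G_\e)$, whose vertices I write as $i^+,i^-$ $(1\le i\le n)$ and in which $i^a$ and $j^b$ with $i\neq j$ are adjacent if and only if $a\,b\,\e_{ij}=1$. For every subset $F$ that is a disjoint union of two cliques of $G_\e$ with no connecting edge, choosing a balancing sign vector $a\colon F\to\{1,-1\}$ with $\e_{ij}=a_ia_j$ produces a clique $\{i^{a_i} : i\in F\}$ of size $|F|$; hence the clique number is at least $d+1$. For the reverse bound, the key observation is that a clique of size at least three contains at most one vertex of each fibre $\{i^+,i^-\}$: if it contained both $i^+$ and $i^-$ together with some $j^b$ $(j\neq i)$, the adjacency rule would force $b\,\e_{ij}=1$ and $-b\,\e_{ij}=1$ at once, which is impossible. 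Since every $2$-element subset of $\{1,\dots,n\}$ is already a disjoint union of two cliques, we have $d+1\ge 2$, so a maximum clique $C$ meets each fibre at most once and thus determines a subset $F$ with a sign vector $a$; its clique condition $a_ia_j\e_{ij}=1$ says precisely that $\e|_F$ is the coboundary of $a$, i.e.\ that $F$ is a disjoint union of two cliques of $G_\e$ with no connecting edge. Therefore the clique number of $\Sw(G_\e)$ equals $d+1$, and $(4)=d$.

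\emph{Main obstacle.} The routine part is the dimension bookkeeping in items $(1)$--$(3)$, which rests on the previously established decomposition of $\Gamma_\e$ and on the combinatorial description of the faces of $\Delta_\e$. The genuine work is the clique analysis of $\Sw(G_\e)$ in the last step: one must pin down the adjacency rule of the switching graph, prove the ``at most one vertex per fibre'' property, and then translate the clique condition into the coboundary condition $\e_{ij}=a_ia_j$. I expect this correspondence between maximum cliques of $\Sw(G_\e)$ and maximal disjoint-union-of-two-cliques subsets of $G_\e$ to be the crux; once it is in place, all four equalities follow by comparison with $d$.
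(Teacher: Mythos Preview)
Your proposal is correct and follows essentially the same route as the paper: the corollary is derived from the characterization of the faces of $\Delta_\e$ (Theorem~\ref{thm:Delta}) as disjoint unions of two non-adjacent cliques of $G_\e$, equivalently as projections of cliques of $\Sw(G_\e)$. Your sign-vector/coboundary reformulation $\e_{ij}=a_ia_j$ is exactly the same content in slightly different language, and your ``at most one vertex per fibre'' argument for cliques of size $\geq 3$ in $\Sw(G_\e)$ is precisely the step $(3)\Rightarrow(2)$ of Theorem~\ref{thm:Delta} carried out directly.
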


Furthermore, when the graph $G _\e$ is a disjoint union of complete graphs,
we give a formula for easily calculating the type of the point variety $\Gamma_\e$ (Corollary \ref{cor:type}).
Using this result, we show that type is not a complete invariant for point varieties (Example \ref{ex:cex}).

\section{Geometric aspects of skew polynomial algebras}
In this section, we discuss geometric aspects of skew (not necessarily $(\pm 1)$-skew) polynomial algebras.
Recall that $k$ denotes an algebraically closed field of characteristic different from $2$ throughout the paper.
\begin{dfn}
A \emph{standard graded skew polynomial algebra} in $n$ variables is a graded algebra
\[ S_{\alpha} = k\langle x_1, \dots, x_n \rangle /(x_ix_j -\alpha_{ij} x_jx_i \mid 1\leq i,j \leq n) \]
where
\begin{itemize}
\item $\deg x_i=1$ for all $1\leq i \leq n$, and
\item $\alpha =(\alpha_{ij}) \in M_n(k)$ is a matrix such that $\alpha_{ii}=\alpha_{ij}\alpha_{ji}=1$ for all $1\leq i,j \leq n$.
\end{itemize}
The number $n$ is called the \emph{dimension of $S_{\alpha}$}, and is denoted by $\dim S_{\alpha}$.
\end{dfn}
A standard graded $(\pm 1)$-skew polynomial algebra $S_\e$, defined in Definition \ref{dfn:pm1}, is clearly a special case of a standard graded skew polynomial algebra.
Since a standard graded skew polynomial algebra in $n$ variables is an $n$-iterated Ore extension of $k$,
we see that it is a (two-sided) noetherian Koszul AS-regular algebra of (global) dimension $n$ with Hilbert series $(1-t)^{-n}$  (see \cite{R} for basic information about AS-regular algebras).

\begin{prop}[{\cite[Lemma 2.3]{Vi}}]\label{prop:skewiso}
Let $S_{\alpha}$ and $S_{\alpha'}$ be standard graded skew polynomial algebras in $n$ variables.
Then $S_{\alpha} \cong S_{\alpha'}$ as graded algebras if and only if there exists a permutation $\sigma \in {\mathfrak S}_n$ such that $\alpha'_{ij}=\alpha_{\sigma(i)\sigma(j)}$ for $1 \leq i, j \leq n$.
\end{prop}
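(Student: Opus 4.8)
The \emph{if} direction is immediate: given a permutation $\sigma \in \mathfrak{S}_n$ with $\alpha'_{ij}=\alpha_{\sigma(i)\sigma(j)}$, the assignment $x_i \mapsto x_{\sigma(i)}$ extends to a graded algebra homomorphism $S_{\alpha'}\to S_{\alpha}$, since the defining relation $x_ix_j-\alpha'_{ij}x_jx_i$ of $S_{\alpha'}$ is carried to $x_{\sigma(i)}x_{\sigma(j)}-\alpha_{\sigma(i)\sigma(j)}x_{\sigma(j)}x_{\sigma(i)}$, which is a defining relation of $S_{\alpha}$; as $\sigma$ is bijective this is an isomorphism. So the whole content lies in the \emph{only if} direction.

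For that, the plan is to reduce to linear algebra. Both algebras are quadratic, generated in degree $1$ by the $n$-dimensional space $V=\langle x_1,\dots,x_n\rangle$, with relation space $R_{\alpha}=\operatorname{span}\{x_i\otimes x_j-\alpha_{ij}x_j\otimes x_i \mid i<j\}\subseteq V\otimes V$ of dimension $\binom{n}{2}$. By the standard dictionary for quadratic algebras, a graded isomorphism $S_{\alpha'}\cong S_{\alpha}$ is the same datum as a linear isomorphism $g\colon V\to V$ (its degree-$1$ component) satisfying $(g\otimes g)(R_{\alpha'})=R_{\alpha}$. Writing $g(x_i)=\sum_k c_{ki}x_k$ with $C=(c_{ki})$ invertible, I would impose $(g\otimes g)(x_i\otimes x_j-\alpha'_{ij}x_j\otimes x_i)\in R_{\alpha}$ and compare coefficients in the basis $\{x_k\otimes x_l\}$. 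Using that a tensor $\sum m_{kl}\,x_k\otimes x_l$ lies in $R_{\alpha}$ exactly when $m_{kk}=0$ for all $k$ and $m_{lk}=-\alpha_{kl}m_{kl}$ for $k<l$, this produces two families of scalar equations: a \emph{diagonal} condition $c_{ki}c_{kj}(1-\alpha'_{ij})=0$ for all $k$ and all $i<j$, and an \emph{antisymmetry} condition relating $c_{ki}c_{lj}-\alpha'_{ij}c_{kj}c_{li}$ to its $(k,l)$-transpose through the factor $-\alpha_{kl}$.

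Next I would exploit the diagonal condition: it says that for each row index $k$ the support $\{i : c_{ki}\neq 0\}$ is a clique of the \emph{commuting graph} of $\alpha'$ (the graph with an edge $\{i,j\}$ whenever $\alpha'_{ij}=1$), and, applying the same analysis to $g^{-1}$, each row of $C^{-1}$ is supported on a clique of the commuting graph of $\alpha$. The goal is to extract from this a permutation $\sigma$ with $c_{\sigma(i),i}\neq 0$, that is, to show that $C$ is, up to harmless freedom among mutually commuting variables, a monomial matrix. Once $C$ may be taken monomial with underlying permutation $\sigma$, substituting $c_{ki}=\lambda_i\,\delta_{k,\sigma(i)}$ into the antisymmetry condition for a pair $(i,j)$ makes the scalars $\lambda_i\lambda_j$ cancel and collapses it to exactly $\alpha'_{ij}=\alpha_{\sigma(i)\sigma(j)}$, which is the desired conclusion.

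The main obstacle is precisely this last reduction, namely controlling genuinely non-monomial isomorphisms. These cannot be excluded outright: among a set of mutually commuting variables (where $\alpha'_{ij}=1$, so the corresponding tensor factor is an ordinary polynomial ring) an invertible linear substitution is a priori allowed. The point is therefore not that $g$ itself is monomial, but that the \emph{discrete} data $\alpha$ and $\alpha'$ are permutation-equivalent. I would establish this by tracking how the antisymmetry condition constrains such mixing: a nontrivial linear combination of several commuting variables is admissible only when those variables have identical relations with all remaining variables, in which case interchanging them is already a symmetry of the matrix and is captured by a permutation. Organizing this, one shows $C$ is block-monomial with respect to the decomposition of the variables into the connected components of the non-commuting graph (where $\alpha_{ij}\neq 1$), so that $g$ carries each component of $S_{\alpha'}$ onto one of $S_{\alpha}$; the antisymmetry condition then matches the scalar $\alpha_{kl}$ attached to each non-commuting pair across the induced bijection. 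Choosing $\sigma$ to realize this bijection of components, and arbitrarily inside each component (where all relevant entries of $\alpha$ and $\alpha'$ equal $1$), yields the required permutation. Verifying carefully that the clique–support combinatorics forces a single global permutation, rather than merely a local matching, is the one genuinely technical point.
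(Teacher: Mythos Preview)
The paper does not prove this proposition; it is simply quoted from \cite[Lemma~2.3]{Vi} and used as a black box, so there is no argument in the paper to compare yours against.

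On the proposal itself: the easy direction, the reduction to a linear map $g\in\operatorname{GL}(V)$ with $(g\otimes g)(R_{\alpha'})=R_\alpha$, and the diagonal and antisymmetry constraints you extract are all correct. The final paragraph, however, is internally inconsistent. You first (correctly) observe that mixing is admissible only among variables having ``identical relations with all remaining variables'', but you then organise the argument by connected components of the \emph{non-commuting} graph and assert that inside each such component ``all relevant entries of $\alpha$ and $\alpha'$ equal $1$''; by definition of the non-commuting graph this is false. Replacing ``non-commuting'' by ``commuting'' does not save the recipe either: for $n=3$ with $\alpha_{12}=1$, $\alpha_{13}=q$, $\alpha_{23}=p$ and $p\neq q$, the set $\{1,2\}$ is a commuting-graph component, yet the transposition $x_1\leftrightarrow x_2$ sends $(\alpha_{13},\alpha_{23})=(q,p)$ to $(p,q)$ and so is not parameter-preserving. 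The decomposition that actually governs the problem is the one you yourself name first, the equivalence $i\sim j\iff \alpha_{ik}=\alpha_{jk}$ for all $k$. A clean way to finish is to note that a degree-one element of $S_\alpha$ is normal precisely when its support lies in a single $\sim$-class; since a graded isomorphism preserves normality and a linear subspace over an infinite field cannot split across a finite union of proper subspaces, $g$ carries each $\sim$-class subspace of $S_{\alpha'}$ isomorphically onto one of $S_\alpha$, and any permutation $\sigma$ refining the induced bijection of classes then satisfies $\alpha'_{ij}=\alpha_{\sigma(i)\sigma(j)}$. As written, the ``one genuinely technical point'' you flag is the entire content of the result and is not yet handled by your sketch.
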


Let $S_{\alpha}$ be a standard graded skew polynomial algebra in $n$ variables.
First, we recall the definition of the noncommutative projective scheme of $S_{\alpha}$.
Let $\GrMod S_{\alpha}$ denote the category of graded right $S_{\alpha}$-modules and degree-preserving $S_{\alpha}$-module homomorphisms.
We define $\QGr S_{\alpha}$ to be the Serre quotient category $\GrMod S_{\alpha}/ \Tors S_{\alpha}$, where $\Tors S_{\alpha}$ is the full subcategory of $\GrMod S_{\alpha}$ consisting of direct limits of finite-dimensional modules.

\begin{dfn}[{\cite{AZ}}]\label{dfn:nps}
Let $S_{\alpha}$ be a standard graded skew polynomial algebra.
We call the pair 
\[\Projn S_{\alpha} :=(\QGr S_{\alpha}, \calS_{\alpha})\]
the \emph{noncommutative projective scheme of $S_{\alpha}$}, where $\calS_{\alpha}$ is the object in $\QGr S_{\alpha}$ that corresponds to the module $S_{\alpha} \in  \GrMod S_{\alpha}$.
We say that two noncommutative projective schemes $\Projn S_{\alpha}$ and $\Projn S_{\alpha'}$ are \emph{isomorphic} if there exists a $k$-linear equivalence functor $F: \QGr S_{\alpha} \to \QGr S_{\alpha'}$ such that $F(\calS_{\alpha}) \cong \calS_{\alpha'}$.
\end{dfn}

Note that if $\Projn S_{\alpha}$ and $\Projn S_{\alpha'}$ are isomorphic, then
$\dim S_{\alpha}=\dim S_{\alpha'}$ (see \cite[Lemma 1.5]{Vi}).

In noncommutative algebraic geometry, $\Projn S_{\alpha}$ is considered as a skew (or quantum) version of $\PP^{n-1}$.
Hence, we refer to $\Projn S_{\alpha}$ of a standard graded skew polynomial algebra $S_{\alpha}$ as a \emph{skew projective space}.
In particular, we refer to $\Projn S_{\e}$ of a standard graded $(\pm 1)$-skew polynomial algebra $S_\e$ as a \emph{$(\pm 1)$-skew projective space}.

As a special case of Zhang's work \cite{Z}, the following is well-known.

\begin{thm}[{\cite[Theorems 3.5 and 3.7]{Z}}] \label{thm:z}
Let $S_{\alpha}$ and $S_{\alpha'}$ be standard graded skew polynomial algebras.
If $\GrMod S_{\alpha}$ is equivalent to $\GrMod S_{\alpha'}$,
then $\Projn S_{\alpha}$ is isomorphic to $\Projn S_{\alpha'}$.
\end{thm}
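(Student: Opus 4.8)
The plan is to deduce this as a formal consequence of Zhang's twisting theory, using that a skew polynomial algebra $S_{\alpha}$ is connected graded ($ (S_\alpha)_0 = k$) and finitely generated in degree one. The conceptual difficulty is that an arbitrary $k$-linear equivalence $\GrMod S_{\alpha} \simeq \GrMod S_{\alpha'}$ is not assumed to commute with the degree-shift functor, so one cannot simply restrict it to $\Tors$ and descend it to the Serre quotients $\QGr$. Repairing this mismatch of gradings is exactly the content of Zhang's work \cite{Z}, and once it is done the passage to $\Projn$ is routine.

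First I would invoke Zhang's classification of graded equivalences \cite[Theorem 3.5]{Z}. Since $S_{\alpha}$ and $S_{\alpha'}$ are connected graded algebras generated in degree one, the hypothesis $\GrMod S_{\alpha} \simeq \GrMod S_{\alpha'}$ forces $S_{\alpha'}$ to be isomorphic, as a graded algebra, to a Zhang twist $S_{\alpha}^{\tau}$ of $S_{\alpha}$ by some twisting system $\tau = \{\tau_i\}_{i \in \ZZ}$ of graded automorphisms. Thus, after replacing $S_{\alpha'}$ by $S_{\alpha}^{\tau}$ up to graded isomorphism, it suffices to prove that $\Projn S_{\alpha} \cong \Projn S_{\alpha}^{\tau}$.

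Second I would exploit the twisting functor $(-)^{\tau}\colon \GrMod S_{\alpha} \to \GrMod S_{\alpha}^{\tau}$, which is an equivalence of categories compatible with the degree shift and which carries each graded module $M$ to its twist $M^{\tau}$ (the same underlying graded vector space with the twisted action). The two observations that finish the argument are: (i) twisting sends finite-dimensional modules to finite-dimensional modules, hence preserves $\Tors$, so the functor descends to a $k$-linear equivalence $\bar F\colon \QGr S_{\alpha} \to \QGr S_{\alpha}^{\tau}$ of the Serre quotients; and (ii) the twist of the regular module $S_{\alpha}$ is precisely the regular module $S_{\alpha}^{\tau}$, so $\bar F$ sends the structure object $\calS_{\alpha}$ to the structure object of $\QGr S_{\alpha}^{\tau}$, which is identified with $\calS_{\alpha'}$. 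Hence $\bar F(\calS_{\alpha}) \cong \calS_{\alpha'}$, and $\bar F$ is an isomorphism of noncommutative projective schemes. This compatibility with the distinguished objects is what is recorded in \cite[Theorem 3.7]{Z}.

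The main obstacle is concentrated entirely in the first step: rigidifying a bare $k$-linear equivalence into a twist. This is nontrivial precisely because the equivalence may scramble the grading, and Zhang's theorem resolves it by tracking how the shift functor is transported across the equivalence and reconstructing the twisting system $\tau$ from this data. Once $\tau$ is in hand, the descent to $\QGr$ and the preservation of $\calS_{\alpha}$ are purely formal, so the whole statement follows by specializing \cite[Theorems 3.5 and 3.7]{Z} to the skew polynomial algebras $S_{\alpha}$ and $S_{\alpha'}$.
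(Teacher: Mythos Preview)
Your proposal is correct and is exactly the approach the paper takes: the paper provides no independent proof but simply records the result as a specialization of Zhang's theorems \cite[Theorems 3.5 and 3.7]{Z}, which is precisely the route you spell out in detail. You have merely made explicit the two-step mechanism (graded equivalence $\Rightarrow$ Zhang twist $\Rightarrow$ descent to $\QGr$ preserving the structure object) that the paper leaves implicit in the citation.
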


The converse of Theorem \ref{thm:z} was stated as a conjecture by Vitoria \cite{Vi}.

\begin{conj}[{\cite[Conjecture 5.7]{Vi}}]\label{conj:vi}
Let $S_{\alpha}$ and $S_{\alpha'}$ be standard graded skew polynomial algebras.
If $\Projn S_{\alpha}$ is isomorphic to $\Projn S_{\alpha'}$, then
$\GrMod S_{\alpha}$ is equivalent to $\GrMod S_{\alpha'}$.
\end{conj}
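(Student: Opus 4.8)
The plan is to prove the conjecture for the subclass of $(\pm 1)$-skew polynomial algebras, where combinatorial tools become available; in full generality I expect the statement to lie deeper, for the reason explained at the end. The strategy is to translate both the hypothesis ($\Projn$-isomorphism) and the conclusion ($\GrMod$-equivalence) into the single combinatorial invariant supplied by the associated graph, and then to run the implications through Theorem \ref{thm:i.main}. The starting point is Zhang's Theorem \ref{thm:z}, which governs $\GrMod$-equivalence by Zhang twists: for a skew polynomial algebra, a twist by a diagonal twisting system rescales $\alpha_{ij}\mapsto \mu_i\mu_j^{-1}\alpha_{ij}$ for scalars $\mu_i\in k^\times$, and, combined with the permutation freedom of Proposition \ref{prop:skewiso}, this should describe the $\GrMod$-equivalence class of $S_\alpha$ entirely in terms of the matrix $\alpha$.

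First I would specialize to the $(\pm 1)$-skew case, where these operations become finite. A diagonal rescaling with $\mu_i\in\{\pm 1\}$ (so that $\mu_i^{-1}=\mu_i$) sends $\e_{ij}\mapsto \mu_i\mu_j\e_{ij}$, which is exactly a \emph{switching} of the graph $G_\e$, while permutations are graph isomorphisms; thus $\GrMod$-equivalence becomes switching equivalence of $G_\e$, which is the content of $(1)\Leftrightarrow(4)$ in Theorem \ref{thm:i.main}. Next I would note that a $\Projn$-isomorphism carries the point objects of $\QGr S_\e$ to those of $\QGr S_{\e'}$ and hence induces an isomorphism of point varieties, giving $(5)\Rightarrow(6)$. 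The heart of the argument is then $(6)\Rightarrow(1)$: I would compute $\Gamma_\e$ explicitly as a union of coordinate linear subspaces whose incidence pattern is dictated by the cliques and the non-adjacencies of $G_\e$, and show that this pattern determines the switching class of $G_\e$. Chaining $(5)\Rightarrow(6)\Rightarrow(1)\Leftrightarrow(4)$ then yields the conjecture for $(\pm 1)$-skew algebras, with Zhang's Theorem \ref{thm:z} supplying the converse $(4)\Rightarrow(5)$ that completes the full cycle of equivalences.

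The main obstacle is precisely the step $(6)\Rightarrow(1)$: a priori the point variety records only the incidence of its irreducible components up to their own symmetries, which could be strictly coarser than the switching class. I would resolve this by passing to the switching graph $\Sw(G_\e)$, whose isomorphism type is a complete switching invariant --- this is the equivalence $(1)\Leftrightarrow(2)$ --- and proving that the component structure of $\Gamma_\e$ reconstructs $\Sw(G_\e)$, and hence the switching class. For \emph{general} skew polynomial algebras this is exactly where the method fails: the entries $\alpha_{ij}\in k^\times$ vary continuously, switching is replaced by a torus of rescalings, and, as recorded in Remark \ref{rem:gsp}, isomorphism of point varieties no longer forces $\GrMod$-equivalence. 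A proof in full generality would therefore require an invariant strictly finer than the point variety that still descends through arbitrary $k$-linear $\QGr$-equivalences; producing such an invariant is the genuine difficulty, and the combinatorial approach succeeds only because in the $(\pm 1)$-case the point variety already encodes all of this information.
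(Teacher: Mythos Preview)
Your overall plan is correct and matches the paper's: the statement remains a conjecture in full generality, and the paper establishes it only for $(\pm 1)$-skew polynomial algebras as the implication $(5)\Rightarrow(4)$ of Theorem~\ref{thm:main}, via precisely the chain $(5)\Rightarrow(6)\Rightarrow(1)\Rightarrow(4)$ you describe (with $(4)\Rightarrow(5)$ supplied by Zhang). Your diagnosis of why the method is confined to the $(\pm 1)$ case is also on point.

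The one tactical difference is in the key step $(6)\Rightarrow(1)$. You propose to reconstruct $\Sw(G_\e)$ from the component structure of $\Gamma_\e$ and then invoke $(2)\Leftrightarrow(1)$; this is plausible in spirit but not obviously executable, since knowing only the images $\pi(C)$ of cliques $C\subset\Sw(G_\e)$ under the $2$-to-$1$ projection does not directly rebuild the graph. The paper instead interposes the simplicial complex $\Delta_\e$ (whose facets are exactly the coordinate sets of irreducible components of $\Gamma_\e$, so $(6)\Rightarrow(7)$ is immediate from Lemma~\ref{lem:psiso}) and proves $(7)\Rightarrow(1)$ directly in Theorem~\ref{thm:Deltom}: fixing one vertex $u$, it writes down an explicit switching set $\scS$ and checks by a case analysis on triples $\{u,v,w\}$ that $G_\e^{\scS}\cong G_{\e'}$. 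The case analysis works because membership of a triple in $\Delta_\e$ depends only on the \emph{parity} of the number of edges among its vertices---exactly the $(\pm 1)$ feature. Your route via $\Sw(G_\e)$ would ultimately have to encode the same parity information, but the paper's direct construction of $\scS$ is shorter and avoids the reconstruction problem.
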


Next, we recall the definition of the point scheme of $S_{\alpha}$.

\begin{dfn}
A graded module $M \in \GrMod S_{\alpha}$ is called a \emph{point module} if $M$ is a cyclic module, generated in degree $0$, with Hilbert series $(1-t)^{-1}$.
\end{dfn}

Let $V$ be a $k$-vector space spanned by $x_1, \dots, x_n$.
In other words, $V$ is the degree $1$ part of $S_{\alpha}$.
The dual vector space of $V$ is denoted by $V^*$.
If $M \in \GrMod S_{\alpha}$ is a point module, then $M$ can be presented as a quotient $S_{\alpha}/(g_1S_{\alpha}+ g_2S_{\alpha}+ \cdots + g_{n-1}S_{\alpha})$ with linearly independent $g_1, \dots, g_{n-1} \in V$
(see \cite[Theorem 3.8, Corollary 5.7]{Mcp}),
so we can associate it with a unique point $p_M := \calV(g_1, \dots, g_{n-1})$ in $\PP({V^*}) = \PP^{n-1}$.
Then the subset
\[ \Gamma_{\alpha} := \{ p_M \in \PP^{n-1} \mid M \in \GrMod S_{\alpha} \;\text{is a point module}\} \; \subset \PP^{n-1} \]
has a $k$-scheme structure by \cite{ATV}.

\begin{dfn}[{\cite{ATV}}]\label{dfn:ps}
The above scheme $\Gamma_{\alpha}$ is called the \emph{point scheme of $S_{\alpha}$}.
\end{dfn}

For a subset $X \subset [n]:= \{1,\dots  n\}$, we define the subspace
\[\PP(X):= \bigcap_{i \in [n]\setminus X} {\mathcal V}(x_i) \; \subset \PP^{n-1}.\]
Note that $\PP([n])=\PP^{n-1}$. 
If $X=\{i_1,\dots,i_s\}$, then $\PP(X)$ is denoted by $\PP(i_1,\dots,i_s)$.

Thanks to the following result due to Vitoria \cite{Vi} and independently Belmans, De~Laet and Le~Bruyn \cite{BDL},
we can compute the point scheme of $S_\alpha$ explicitly.

\begin{thm}[{\cite[Proposition 4.2]{Vi}, \cite[Theorem 1 (1)]{BDL}}]\label{thm.ps}
Let $S_\alpha = k\langle x_1, \dots, x_n \rangle/(x_ix_j-\alpha_{ij}x_jx_i)$ be a standard graded skew polynomial algebra.
Then the point scheme of $S_\alpha$ is given by
\[ \Gamma_{\alpha}= \bigcap_{\substack{1\leq i<j<h\leq n \\ \alpha_{ij}\alpha_{jh}\alpha_{hi} \neq 1}}\calV(x_ix_jx_h) \quad \subset \PP^{n-1}. \]
In particular, it is the union of a collection of subspaces $\PP(i_1,\dots,i_s)$.
\end{thm}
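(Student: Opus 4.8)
The plan is to parametrize point modules explicitly by sequences of points and to read off the stated equations by multilinearizing the defining relations, following Artin, Tate and Van~den~Bergh \cite{ATV}. Recall that a point module $M$ is presented as $S_\alpha/(g_1 S_\alpha + \cdots + g_{n-1}S_\alpha)$; choosing a basis vector $m_d$ of each $1$-dimensional component $M_d$, the right action is recorded by scalars via $m_d x_i = c_i^{(d)} m_{d+1}$, which assemble into a sequence of points $p_d := [c_1^{(d)} : \cdots : c_n^{(d)}] \in \PP^{n-1}$ with $p_0 = p_M = \calV(g_1, \dots, g_{n-1})$. Applying the relation $x_i x_j - \alpha_{ij}x_j x_i$ to $m_d$ yields the bilinear compatibility conditions
\[ c_i^{(d)} c_j^{(d+1)} = \alpha_{ij}\, c_j^{(d)} c_i^{(d+1)} \qquad (1 \le i, j \le n,\ d \ge 0), \]
so that point modules correspond exactly to infinite sequences $(p_0, p_1, \dots)$ satisfying these equations, and the truncations give the truncated point schemes.

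For the inclusion $\Gamma_\alpha \subseteq \bigcap \calV(x_i x_j x_h)$, I would fix a point module and a bad triple $i<j<h$ with $\alpha_{ij}\alpha_{jh}\alpha_{hi}\neq 1$ and show it cannot lie in the support $I_0$ of $p_0$. Setting $r_i := c_i^{(1)}/c_i^{(0)}$ for $i \in I_0$, the compatibility equations force $r_j = \alpha_{ij} r_i$ for all $i, j \in I_0$; a short argument shows the $r_i$ cannot all vanish (otherwise $p_1 = 0$), hence none do. If $\{i,j,h\} \subseteq I_0$, then composing $r_j = \alpha_{ij}r_i$, $r_h = \alpha_{jh}r_j$, $r_i = \alpha_{hi}r_h$ gives $\alpha_{ij}\alpha_{jh}\alpha_{hi} = 1$, contradicting badness. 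Thus no bad triple lies in $I_0$, i.e.\ $p_0 \in \calV(x_ix_jx_h)$ for every bad triple. Note that this argument already uses only the length-$3$ truncation.

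For the reverse inclusion, suppose $\mathrm{supp}(p_0) = I_0$ contains no bad triple. Vanishing of $\alpha_{ij}\alpha_{jh}\alpha_{hi} - 1$ on $I_0$ is a cocycle condition, so, fixing $i_0 \in I_0$, I would define $p_d$ by $c_i^{(d)} := \alpha_{i_0 i}^{\,d}\, c_i^{(0)}$ for $i \in I_0$ and $c_i^{(d)} := 0$ otherwise. A direct check shows that these satisfy all compatibility equations: the only nontrivial constraint is $\alpha_{i_0 i}\alpha_{ij}\alpha_{j i_0} = 1$ for $i, j \in I_0$, which is precisely the triple $\{i_0, i, j\}$ being good. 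Since every $c_i^{(d)}$ with $i \in I_0$ is nonzero, the resulting graded module is cyclic with all components $1$-dimensional, hence an honest point module with $p_M = p_0$, so $p_0 \in \Gamma_\alpha$.

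Finally, both membership conditions depend only on the support of a point, and $\{p : \mathrm{supp}(p) \subseteq X\} = \PP(X)$; hence $\Gamma_\alpha = \bigcup \{\,\PP(X) : X$ contains no bad triple$\,\}$, which gives the asserted description as a union of coordinate subspaces. The step I expect to require the most care is matching the \emph{scheme} structures rather than merely the underlying sets: one must confirm that the Artin--Tate--Van~den~Bergh point scheme, a priori an inverse limit of truncated point schemes, is already computed here by the length-$3$ truncation and is reduced, so that it coincides with $\bigcap \calV(x_ix_jx_h)$ as a closed subscheme. This stabilization and reducedness is where the Koszul AS-regularity of $S_\alpha$ enters, and it is the only input beyond the elementary combinatorics above.
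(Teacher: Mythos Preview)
The paper does not give its own proof of this theorem; it is quoted as a known result from \cite[Proposition 4.2]{Vi} and \cite[Theorem 1 (1)]{BDL} and used as a black box (the subsequent Remark~\ref{rem:pv} merely records that the resulting scheme is reduced). So there is no in-paper argument to compare against.

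Your outline is the standard Artin--Tate--Van~den~Bergh computation and is correct at the level of closed points. Two small comments. First, in the forward inclusion you assert that if all $r_i$ ($i\in I_0$) vanish then $p_1=0$; this uses the extra observation that $\mathrm{supp}(p_1)\subseteq I_0$, which follows from the same bilinear relation applied to $i\in I_0$, $j\notin I_0$ (it forces $c_j^{(1)}=0$). Second, you are right that the only genuinely nontrivial point is the passage from the set-theoretic equality to the scheme-theoretic one. One clean way to close this, specific to skew polynomial algebras and not requiring the full strength of Koszul AS-regularity, is to note that the multilinearized relations already cut out the length-$2$ truncated point scheme $\Gamma_2\subset\PP^{n-1}\times\PP^{n-1}$ by the bilinear equations $u_iv_j=\alpha_{ij}u_jv_i$; eliminating the second factor (via the explicit automorphism you construct, $v_i=\alpha_{i_0 i}u_i$ on each coordinate stratum) shows that the first projection $\Gamma_2\to\PP^{n-1}$ is an isomorphism onto the reduced subscheme $\bigcap\calV(x_ix_jx_h)$, and that the inverse system $\{\Gamma_d\}$ stabilizes there. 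This is essentially what is carried out in \cite{Vi} and \cite{BDL}.
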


\begin{rem}\label{rem:pv}
By Theorem \ref{thm.ps}, the point scheme $\Gamma_{\alpha}$ is reduced.
Therefore, we refer to $\Gamma_{\alpha}$ as the \emph{point variety of $S_{\alpha}$} from now on.
\end{rem}

\begin{ex}
\begin{enumerate}
\item The point variety of a standard graded skew polynomial algebra in $1$ variable is given by $\PP^0=\PP(1)$.
\item The point variety of a standard graded skew polynomial algebra in $2$ variables is given by $\PP^1= \PP(1,2)$.
\item The point variety of a standard graded skew polynomial algebra in $3$ variables is isomorphic to $\PP^2=\PP(1,2,3)$ or $\PP(2,3) \cup \PP(1,3) \cup \PP(1,2)$.
\end{enumerate}
\end{ex}

The classification of the point varieties of standard graded skew polynomial algebras in $4$ variables is as follows.

\begin{prop}[{\cite[Corollary 5.1]{Vi}, \cite[Section 4.2]{BDL}}] \label{prop.4ps}
Let $S_{\alpha}$ be a graded skew polynomial algebra in $4$ variables.
Then the point variety of $S_\alpha$ is isomorphic to one of the following:
\begin{enumerate}
\item[(\rnum 1)] $\PP^3=\PP(1,2,3,4)$,
\item[(\rnum 2)] $\PP(1,2,4) \cup \PP(1,2,3) \cup \PP(3,4)$,
\item[(\rnum 3)] $\PP(2,3,4) \cup \PP(1,4) \cup \PP(1,3) \cup \PP(1,2)$,
\item[(\rnum 4)] $\PP(3,4) \cup \PP(2,4) \cup \PP(2,3) \cup \PP(1,4) \cup \PP(1,3) \cup \PP(1,2)$.
\end{enumerate}
\end{prop}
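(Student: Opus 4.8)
The plan is to reduce the statement to a purely combinatorial count, since by Theorem \ref{thm.ps} the point variety $\Gamma_{\alpha}$ depends only on which triples are \emph{defective}. Call a triple $1 \le i < j < h \le 4$ defective if $\alpha_{ij}\alpha_{jh}\alpha_{hi} \neq 1$, and let $\mathcal{B} \subseteq \binom{[4]}{3}$ be the set of defective triples. Theorem \ref{thm.ps} gives $\Gamma_{\alpha} = \bigcap_{\{i,j,h\}\in\mathcal{B}} \calV(x_i x_j x_h)$, so a point of $\PP^3$ lies on $\Gamma_{\alpha}$ exactly when its support (the set of nonzero coordinates) contains no defective triple. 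Hence $\Gamma_{\alpha} = \bigcup_{X} \PP(X)$, where $X$ runs over the maximal subsets of $[4]$ containing no member of $\mathcal{B}$, and the whole problem becomes that of classifying $\mathcal{B}$ up to the action of the symmetric group ${\mathfrak S}_4$ permuting the variables: a permutation $\sigma$ induces the coordinate automorphism of $\PP^3$ carrying one such union of coordinate subspaces onto the other, so ${\mathfrak S}_4$-equivalent configurations $\mathcal{B}$ give isomorphic point varieties (compare Proposition \ref{prop:skewiso}).

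The key step, and the one I expect to be the main obstacle, is to show that $|\mathcal{B}| \neq 1$; that is, that exactly one defective triple can never occur. For this I would set $c_{ijh} := \alpha_{ij}\alpha_{jh}\alpha_{hi}$ and exploit the single multiplicative relation among the four triangle products. Using only $\alpha_{ij}\alpha_{ji}=1$, a direct computation gives
\[ c_{123}\,c_{134} = \alpha_{12}\alpha_{23}\alpha_{34}\alpha_{41} = c_{124}\,c_{234}, \]
since both products collapse to the $4$-cycle product $\alpha_{12}\alpha_{23}\alpha_{34}\alpha_{41}$ after cancelling the reciprocal pairs $\alpha_{13}\alpha_{31}$ and $\alpha_{24}\alpha_{42}$. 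If exactly one of the four triangle products differed from $1$, then one side of this identity would equal $1$ while the other would not, a contradiction. Consequently $|\mathcal{B}| \in \{0,2,3,4\}$.

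It then remains to check that, for each admissible cardinality, the configuration $\mathcal{B}$ is unique up to ${\mathfrak S}_4$ and to read off the corresponding components. Here I would use that any two distinct $3$-subsets of $[4]$ meet in exactly two points, so ${\mathfrak S}_4$ acts transitively on the family of all $\mathcal{B}$ of each fixed size $0,2,3,4$. For $|\mathcal{B}|=0$ every subset is allowed and $\Gamma_{\alpha}=\PP(1,2,3,4)$, giving (\rnum 1). For $|\mathcal{B}|=2$, taking $\mathcal{B}=\{\{1,3,4\},\{2,3,4\}\}$, the maximal non-defective subsets are $\{1,2,3\}$, $\{1,2,4\}$ and $\{3,4\}$, giving (\rnum 2). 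For $|\mathcal{B}|=3$ the three defective triples share a common vertex (say $1$), and the maximal non-defective subsets are $\{2,3,4\}$, $\{1,2\}$, $\{1,3\}$, $\{1,4\}$, giving (\rnum 3). For $|\mathcal{B}|=4$ every triple is defective, so only the $\binom{4}{2}=6$ pairs survive and $\Gamma_{\alpha}$ is the union of the six lines $\PP(i,j)$, giving (\rnum 4). In each case the computation of the maximal non-defective subsets is routine, so the only genuinely delicate point is the exclusion of $|\mathcal{B}|=1$ carried out above.
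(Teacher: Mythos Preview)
The paper does not supply its own proof of this proposition; it simply records the result with citations to \cite[Corollary 5.1]{Vi} and \cite[Section 4.2]{BDL}. Your argument is correct and self-contained: you reduce via Theorem~\ref{thm.ps} to the combinatorics of the set $\mathcal{B}\subseteq\binom{[4]}{3}$ of defective triples, use the identity $c_{123}c_{134}=c_{124}c_{234}$ (both sides equal the $4$-cycle product $\alpha_{12}\alpha_{23}\alpha_{34}\alpha_{41}$) to exclude $|\mathcal{B}|=1$, observe that the complement bijection $\binom{[4]}{3}\leftrightarrow[4]$ makes the $\mathfrak{S}_4$-action on $\mathcal{B}$ transitive for each remaining cardinality, and then read off the irreducible components in each case. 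All steps check out, including the explicit lists of maximal non-defective subsets. Since the paper itself offers no argument here, there is nothing further to compare; your proof is a clean, elementary substitute for the external references.
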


\begin{rem} \label{rem:psd}
Some of the point varieties of skew polynomial algebras do not appear as the point varieties of  ($\pm 1$)-skew polynomial algebras.
For example, (\rnum 3) in Proposition \ref{prop.4ps} does not appear as the point variety of  a ($\pm 1$)-skew polynomial algebra (see Example \ref{ex:n=4}). 
Let $\#\Iso \Gamma_{\alpha}^n$ (resp.\ $\#\Iso \Gamma_{\e}^n$) be the number of the isomorphism classes of the point varieties of skew polynomial algebras (resp.\ ($\pm 1$)-skew  polynomial algebras) in $n$ variables. Then we know
\[
\renewcommand{\arraystretch}{1.2}
\begin{array} {c|cccccc}  
n &1&2&3&4&5&\cdots \\ \hline
\#\Iso \Gamma_{\alpha}^n&1&1&2&4&16&\cdots\\
\#\Iso \Gamma_{\e}^n&1&1&2&3&7&\cdots
\end{array}
\]
(see \cite[Section 4]{BDL}, \cite[Section 6.4]{MU}; see also Examples \ref{ex:n=3}, \ref{ex:n=4}, \ref{ex:n=5}).
\end{rem}

In \cite{Mss}, Mori studied the relationship between noncommutative projective schemes and point schemes in a more general setting.
The following theorem plays a key role in our main result.
\begin{thm} \label{thm:mo}
Let $S_{\alpha}$ and $S_{\alpha'}$ be standard graded skew polynomial algebras.
If $\Projn S_{\alpha}$ and $\Projn S_{\alpha'}$ are isomorphic, then 
the point varieties $\Gamma_{\alpha}$ and $\Gamma_{\alpha'}$ are isomorphic.
\end{thm}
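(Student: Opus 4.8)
The plan is to show that the point variety $\Gamma_\alpha$ can be reconstructed purely from the pair $(\QGr S_\alpha, \calS_\alpha)$, so that any $k$-linear equivalence $F \colon \QGr S_\alpha \to \QGr S_{\alpha'}$ with $F(\calS_\alpha) \cong \calS_{\alpha'}$ automatically matches the reconstructed data on the two sides. Write $\pi \colon \GrMod S_\alpha \to \QGr S_\alpha$ for the quotient functor and call an object of the form $\pi M$, with $M$ a point module, a \emph{point object}. Since each $S_\alpha$ is noetherian, connected graded, and generated in degree one, the Artin--Zhang machinery applies, and the first task is to characterize point objects intrinsically inside $\QGr S_\alpha$.

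First I would translate the defining Hilbert-series condition on a point module into categorical language relative to the distinguished object $\calS_\alpha$. A point module is a cyclic module with one-dimensional graded pieces, so by ampleness and the $\chi$ condition one has $\dim_k \Hom_{\QGr S_\alpha}(\calS_\alpha, (\pi M)(i)) = 1$ for $i \gg 0$, which is precisely the categorical shadow of the Hilbert series $(1-t)^{-1}$; moreover $\pi M$ is a minimal (indeed simple noetherian) object. The point is that these conditions refer only to $\calS_\alpha$, the shift autoequivalence, and the abelian structure of $\QGr S_\alpha$. Conversely, using that $S_\alpha$ is Koszul AS-regular one checks that every object meeting these conditions is, up to shift, the image of a genuine point module, so that point objects are exactly the objects cut out by this intrinsic description.

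The main obstacle is that the definition of an isomorphism of noncommutative projective schemes (Definition \ref{dfn:nps}) only requires $F(\calS_\alpha) \cong \calS_{\alpha'}$ and says nothing about the shift functor $(1)$, whereas the characterization above uses the shift. The key step is therefore to recover the shift---or at least the ample sequence $\{\calS_\alpha(i)\}_{i \geq 0}$ needed to detect point objects---from the bare pair $(\QGr S_\alpha, \calS_\alpha)$. Here I would invoke the reconstruction theory of Artin--Zhang: for a noetherian connected graded algebra satisfying $\chi$ and admitting an ample sequence, the graded endomorphism data of $\calS_\alpha$ recover the algebra $S_\alpha$, and hence its twists, up to the equivalence, so that the sequence $\{\calS_\alpha(i)\}$, and thus the whole class of point objects, is preserved by $F$. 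This is the heart of the argument and the place where the hypotheses that $S_\alpha$ is noetherian and AS-regular are essential.

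Finally, to upgrade the resulting bijection of point objects into an isomorphism of the schemes $\Gamma_\alpha$ and $\Gamma_{\alpha'}$, I would use that $\Gamma_\alpha$ represents the moduli functor of truncated point modules, as in Artin--Tate--Van den Bergh. Since truncated point modules and their flat families over a commutative base are detected by the same categorical conditions relative to $\calS_\alpha$, the equivalence $F$ induces an isomorphism of the representing schemes, i.e.\ $\Gamma_\alpha \cong \Gamma_{\alpha'}$; as these schemes are reduced by Remark \ref{rem:pv}, this is an isomorphism of varieties, completing the proof.
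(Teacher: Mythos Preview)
The paper does not give its own argument here; it simply records that the statement is a special case of \cite[Theorem~3.4]{Mss}. Your sketch is therefore not so much to be compared with the paper's proof as with what lies behind Mori's theorem, and at that level the high-level strategy (characterize point objects intrinsically in $(\QGr S_\alpha,\calS_\alpha)$, then pass to the moduli) is the right one.

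There is, however, a genuine gap at the step you yourself flag as ``the main obstacle.'' You propose to recover the shift autoequivalence from the bare pair $(\QGr S_\alpha,\calS_\alpha)$ by ``invoking the reconstruction theory of Artin--Zhang.'' This is circular: Artin--Zhang reconstruction takes as \emph{input} both a distinguished object $\calO$ and an ample autoequivalence $s$, and returns the algebra $\bigoplus_{i\geq 0}\Hom(\calO,s^i\calO)$. It does not produce $s$ from $\calO$ alone, and indeed the pair cannot determine $s$: already for $S_\alpha=k[x_1,x_2]$ one has $\QGr S_\alpha\simeq\Qcoh\PP^1$ with $\calS_\alpha=\calO_{\PP^1}$, and tensoring by $\calO(1)$ or by $\calO(2)$ are both ample autoequivalences giving non-isomorphic graded algebras. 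So the sentence ``the graded endomorphism data of $\calS_\alpha$ recover the algebra $S_\alpha$, and hence its twists'' does not go through as written.

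The way around this, and what actually happens in \cite{Mss}, is to avoid the shift altogether in the characterization of point objects. One shows that the images in $\qgr S_\alpha$ of point modules are exactly the simple (equivalently, $1$-critical) noetherian objects $\calP$ admitting a surjection $\calS_\alpha\twoheadrightarrow\calP$; these conditions use only the abelian structure and the distinguished object, so they transfer along $F$. The embedding into $\PP(V^*)$ is then obtained from $\Hom(\calS_\alpha,\calP)$ and the linear relations it satisfies, again without reference to $(1)$. If you rewrite your first paragraph along these lines you can delete the problematic ``recover the shift'' paragraph entirely, and the final moduli step becomes the honest content of the argument rather than an afterthought.
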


\begin{proof}
This is a special case of \cite[Theorem 3.4]{Mss}.
\end{proof}

We now prepare a lemma that we will use later.
For $X=\{i_1,\dots,i_s\} \subset [n]$ and a permutation $\sigma \in \mathfrak{S}_n$,
let $\sigma (X) = \{\sigma(i_1),\dots,\sigma(i_s)\}$.

\begin{lem}\label{lem:psiso}
Let $S_{\alpha}$ and $S_{\alpha'}$ be standard graded skew polynomial algebras of $\dim S_\alpha=n$ and $\dim S_{\alpha'}=n'$.
If the point varieties $\Gamma_\alpha= \PP(X_1) \cup \cdots \cup \PP(X_r)$ and $\Gamma_{\alpha'}$ are isomorphic,
then $n=n'$ and there exists a permutation $\sigma \in \mathfrak{S}_n$ such that
$\Gamma_{\alpha'}=\PP(\sigma(X_1)) \cup \cdots \cup \PP(\sigma(X_r))$.
\end{lem}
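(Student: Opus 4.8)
My first move is to understand exactly what data the point variety retains. By Theorem~\ref{thm.ps}, each $\Gamma_\alpha$ is a union of coordinate subspaces $\PP(X)$, and such a union is essentially a simplicial complex on $[n]$: the subspace $\PP(X)$ is contained in $\PP(Y)$ precisely when $X\subseteq Y$, so the \emph{maximal} members $\PP(X_1),\dots,\PP(X_r)$ (the facets) are determined by $\Gamma_\alpha$ as a variety via its irreducible components. The key geometric fact I want to extract is that an abstract isomorphism $\varphi\colon \Gamma_\alpha \xrightarrow{\sim} \Gamma_{\alpha'}$ of varieties must permute the irreducible components and, more strongly, must arise from a coordinate permutation of the ambient projective space. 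So the plan is: (i) recover $n$ from $\Gamma_\alpha$; (ii) show $\varphi$ is induced by a permutation $\sigma\in\mathfrak{S}_n$; (iii) read off that $\varphi$ sends $\PP(X_i)$ to $\PP(\sigma(X_i))$, giving the claimed description of $\Gamma_{\alpha'}$.

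\textbf{Recovering the dimension.}
First I would pin down $n=n'$. The point variety always contains every $\PP(i)=\PP(\{i\})$ as a point, and in fact the singleton sets $\{1\},\dots,\{n\}$ always form faces (taking $s=1$ in Theorem~\ref{thm.ps}), so the $n$ coordinate points $e_1,\dots,e_n$ all lie in $\Gamma_\alpha$; these are exactly the points of $\Gamma_\alpha$ lying in the maximal possible number of the components, or alternatively the ambient $\PP^{n-1}$ is recoverable as the linear span of $\Gamma_\alpha$. Either way, the integer $n$ is an isomorphism invariant of $\Gamma_\alpha$ (this is also implicit in the remark after Definition~\ref{dfn:nps} that isomorphism of the schemes forces equal dimension), so $n=n'$ and both varieties sit inside the same $\PP^{n-1}$.

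\textbf{The main obstacle and how I would handle it.}
The hard part is step (ii): upgrading an \emph{abstract} isomorphism of varieties to a \emph{coordinate} permutation. An isomorphism of unions of linear subspaces need not a priori be linear, so I must use the combinatorial rigidity of these particular configurations. The approach is to work with the intersection poset: the irreducible components $\PP(X_1),\dots,\PP(X_r)$ and all their pairwise and higher intersections $\PP(X_{i_1}\cap\cdots\cap X_{i_t})$ form a lattice isomorphic to the face lattice of the associated simplicial complex (this is the complex $\Delta_\alpha$ anticipated in the introduction). An isomorphism $\varphi$ permutes the irreducible components, hence induces an automorphism of this intersection lattice; the atoms of the lattice are the coordinate points $\PP(i)$, so $\varphi$ induces a permutation $\sigma$ of $[n]$ with $\varphi(\PP(X))=\PP(\sigma(X))$ for every face $X$. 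I would argue that a simplicial isomorphism of the facet sets forces $\sigma(X_i)$ to be precisely the facets of $\Gamma_{\alpha'}$. One subtlety to check is that the top-dimensional faces can be read off intrinsically even when components have equal dimension, and that low-dimensional degenerate cases (e.g.\ $\PP^1$ or $\PP^0$, where the complex is a simplex) do not break the argument; these are handled directly since there a single facet $\PP([n])$ forces $\sigma$ to be arbitrary, consistent with the statement.

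\textbf{Conclusion.}
Once $\sigma$ is produced, the identity $\Gamma_{\alpha'}=\PP(\sigma(X_1))\cup\cdots\cup\PP(\sigma(X_r))$ follows because $\varphi$ carries the component $\PP(X_i)$ isomorphically onto a coordinate subspace of $\Gamma_{\alpha'}$, and the only coordinate subspaces of the right dimension meeting the lattice correctly are the $\PP(\sigma(X_i))$. I expect the cleanest writeup to factor through the simplicial complex picture rather than argue with varieties directly, so I would likely cite the combinatorial bookkeeping of $\Delta_\alpha$ and reduce the whole lemma to the statement that an isomorphism of the unions of coordinate subspaces is induced by a relabeling of coordinates.
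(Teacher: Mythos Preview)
Your overall strategy---use that $\varphi$ permutes irreducible components and preserves their intersection pattern---is the same as the paper's. But there is a real gap in your step~(ii). You assert that the lattice of intersections of $\PP(X_1),\dots,\PP(X_r)$ is ``isomorphic to the face lattice of the associated simplicial complex'' and that its atoms are the individual coordinate points $\PP(i)$. Neither holds in general. Take $\Gamma_\alpha=\PP(1,2,3)\cup\PP(1,2,4)\cup\PP(3,4)$ (case~(\rnum 2) of Proposition~\ref{prop.4ps}, which is the worked example following the lemma): the only nonempty intersections of components are $\PP(1,2)$, $\PP(3)$ and $\PP(4)$, so the atoms of the intersection lattice are those three objects, not four singletons. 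The points $\PP(1)$ and $\PP(2)$ are \emph{not} distinguished from one another---nor from any generic point of the line $\PP(1,2)$---by membership in the components, so $\varphi$ has no reason to send $\PP(1)$ to a coordinate point. Consequently you cannot read off $\sigma$ from the action of $\varphi$ on coordinate points.

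The paper sidesteps this by never asking $\varphi$ to act on individual vertices. It only uses that $\varphi$ restricts to isomorphisms $\PP\bigl(\bigcap_{i\in I}X_i\bigr)\to\PP\bigl(\bigcap_{i\in I}Y_i\bigr)$, hence $\bigl|\bigcap_{i\in I}X_i\bigr|=\bigl|\bigcap_{i\in I}Y_i\bigr|$ for every $\emptyset\neq I\subset[r]$. From this purely numerical data one constructs $\sigma$ abstractly: group the elements of $[n]$ by their ``type'' $\{i:x\in X_i\}$; the intersection cardinalities determine (via inclusion--exclusion) how many vertices carry each type, so the $X_i$ system and the $Y_i$ system have matching type multiplicities, and any type-preserving bijection $\sigma$ works. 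In the example above $\sigma$ may swap $1$ and $2$ freely---exactly the non-uniqueness the paper flags. The fix to your argument is therefore to drop the claim about atoms and argue with cardinalities of intersections, producing $\sigma$ combinatorially rather than from the pointwise action of $\varphi$.
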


\begin{proof}
Let $\phi: \Gamma_\alpha \to \Gamma_{\alpha'}$ be an isomorphism.
Since an irreducible component of $\Gamma_{\alpha'}$ is of the form $\PP(Y)$ for some $Y \subset [n']$, we have $\Gamma_{\alpha'} = \PP(Y_1) \cup \cdots \cup \PP(Y_r)$, where
$\PP(Y_i)=\phi(\PP(X_i))$ and $Y_i$ is a subset of $[n']$ such that $|Y_i|=|X_i|$ for every $1\leq i\leq r$.
For every nonempty subset $I \subset [r]$, $\phi$ restricts to an isomorphism 
$\phi_{I}:\PP\left(\bigcap _{i \in I}X_i\right)= \bigcap _{i \in I}\PP\left(X_i\right) \to \bigcap _{i \in I}\PP\left(Y_i\right)= \PP\left(\bigcap _{i \in I}Y_i\right)$,
so we see that $|\bigcap _{i \in I}X_i|=|\bigcap _{i \in I}Y_i|$.
Thus there exists a (not necessarily unique) bijection $\sigma: \bigcup_{1 \leq i \leq r} X_i \to \bigcup_{1 \leq i \leq r} Y_i$ such that
$\sigma(\bigcap _{i \in I}X_i)=\bigcap _{i \in I}Y_i$ for every $\emptyset \neq I \subset [r]$.
In particular, $\sigma(X_i)=Y_i$ for every $1 \leq i \leq r$.
Moreover, since $\PP(i,j)$ are contained in $\Gamma_{\alpha}$ for all $1\leq i<j \leq n$, we have $\bigcup_{1 \leq i \leq r} X_i =[n]$. Similarly, $\bigcup_{1 \leq i \leq r} Y_i =[n']$.
Therefore, we obtain $n=n'$ and $\sigma \in \mathfrak{S}_n$.
\end{proof}

\section{Combinatorial aspects of $(\pm 1)$-skew polynomial algebras}
In this section, we discuss combinatorial aspects of $(\pm 1)$-skew polynomial algebras $S_{\e} = k\langle x_1, \dots, x_n \rangle /(x_ix_j -\e_{ij} x_jx_i)$ (defined in Definition \ref{dfn:pm1}).

A \emph{graph} $G$ consists of a set of vertices $V(G)$ and a set of edges $E(G)$ between two vertices. 
In this paper, we always assume that $V(G)$ is a finite set and $G$ has neither loops nor multiple edges. 
An edge between two vertices $v, w\in V(G)$ is written by $vw \in E(G)$. 

\begin{dfn} \label{dfn:g}
Let $S_\e = k\langle x_1, \dots, x_n \rangle /(x_ix_j -\e_{ij} x_jx_i)$ be a $(\pm 1)$-skew polynomial algebra.
The \emph{graph $G_{\e}$ associated to $S_\e$} is defined to be the graph with the vertex set 
$V(G_{\e})=\{1, \dots , n\}=[n]$ and the edge set $E(G_{\e})=\{ij \mid \e_{ij}=\e_{ji}=1, i \neq j \}$. 
\end{dfn}

We say that two graphs $G$ and $G'$ are \emph{isomorphic} as graphs if there exists a bijection $f$ between $V(G)$ and $V(G')$ which induces a bijection between $E(G)$ and $E(G')$. 
\begin{prop} \label{prop:giso}
Two standard graded $(\pm 1)$-skew polynomial algebras $S_{\e}$ and $S_{\e'}$ are isomorphic as graded algebras if and only if $G_{\e}$ and $G_{\e'}$ are isomorphic as graphs. 
\end{prop}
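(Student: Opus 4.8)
The plan is to reduce the statement to the algebraic classification already recorded in Proposition~\ref{prop:skewiso}, specializing it to the $(\pm 1)$-skew setting and then translating the permutation condition into a graph isomorphism. Since every standard graded $(\pm 1)$-skew polynomial algebra is in particular a standard graded skew polynomial algebra (with $\alpha_{ij}=\e_{ij}\in\{1,-1\}$), Proposition~\ref{prop:skewiso} tells us that $S_\e\cong S_{\e'}$ as graded algebras if and only if there is a permutation $\sigma\in\mathfrak{S}_n$ with $\e'_{ij}=\e_{\sigma(i)\sigma(j)}$ for all $1\le i,j\le n$. (One first checks that $S_\e$ and $S_{\e'}$ have the same dimension $n$, since graded-algebra isomorphisms preserve Hilbert series and hence the number of variables; if the dimensions differ, neither side of the claimed equivalence can hold.) So the entire content is to show that the existence of such a $\sigma$ is equivalent to $G_\e\cong G_{\e'}$ as graphs.

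The key step is the observation that the matrix $\e$ is completely determined by the graph $G_\e$, and conversely: the off-diagonal entries satisfy $\e_{ij}=1$ exactly when $ij\in E(G_\e)$ and $\e_{ij}=-1$ otherwise (for $i\neq j$), while the diagonal entries are fixed at $\e_{ii}=1$. Thus a permutation $\sigma$ satisfying $\e'_{ij}=\e_{\sigma(i)\sigma(j)}$ for all $i\neq j$ is precisely a bijection $\sigma\colon[n]\to[n]$ with the property that $ij\in E(G_{\e'})\iff \sigma(i)\sigma(j)\in E(G_\e)$, which is exactly the definition of a graph isomorphism $G_{\e'}\to G_\e$ (the diagonal condition $\e_{ii}=\e'_{ii}=1$ is automatic and imposes no constraint). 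I would carry this out by arguing both directions explicitly: given the permutation from Proposition~\ref{prop:skewiso}, read off that $f=\sigma^{-1}$ (or $\sigma$, depending on the chosen direction of the bijection) sends $E(G_\e)$ bijectively onto $E(G_{\e'})$; conversely, given a graph isomorphism $f\colon V(G_\e)\to V(G_{\e'})$, define $\sigma$ from $f$ and verify $\e'_{ij}=\e_{\sigma(i)\sigma(j)}$ entry by entry, splitting into the cases $i=j$ and $i\neq j$.

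The only mild subtlety, and the place I would be most careful, is bookkeeping the direction of the bijection and the symmetry conventions: $\e$ is symmetric and $G_\e$ is undirected, so the condition ``$\e'_{ij}=\e_{\sigma(i)\sigma(j)}$ for all $i,j$'' need only be checked for $i<j$, and one must make sure the identification of the permutation $\sigma$ with a graph isomorphism respects whether edges are being pulled back or pushed forward. This is purely notational and carries no real mathematical obstacle. In short, the proof is a two-line translation once Proposition~\ref{prop:skewiso} is invoked, with the graph $G_\e$ serving as a faithful combinatorial encoding of the sign matrix $\e$.
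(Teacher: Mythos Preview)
Your proposal is correct and takes essentially the same approach as the paper: the paper's proof is simply the one-line statement ``This follows from Proposition~\ref{prop:skewiso},'' and your argument is precisely the unpacking of that citation. The additional care you take with dimensions, directions of bijections, and the symmetry of $\e$ is sound bookkeeping, though the paper evidently regards all of it as routine.
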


\begin{proof}
This follows from Proposition \ref{prop:skewiso}.
\end{proof}

First, let us recall the notion of mutation of graphs.

\begin{dfn}[{\cite[Definition 6.3]{MU}}]\label{dfn:mu}
Let $G$ be a graph and $v \in V(G)$ a vertex. 
The \emph{mutation $\mu_v(G)$ of $G$ at $v$} is the graph $\mu_v(G)$ defined by $V(\mu_v(G))=V(G)$ and
$$E(\mu_v(G))=\{vw\mid vw \not \in E(G), w\neq v\}\cup \{uw \mid uw\in E(G), u\neq v, w\neq v\}.$$
We say that two graphs $G$ and $G'$ are \emph{mutation equivalent} if $G'$ can be obtained (up to isomorphism) from $G$ by a sequence of mutations at some vertices.
\end{dfn}

Note that the notion of mutation was recently introduced in \cite{MU}
to compute the stable categories of graded maximal Cohen-Macaulay modules over certain noncommutative quadric hypersurfaces (see also \cite{HU}).
However, the same notion, called switching, was already introduced by van~Lint and Seidel \cite{vLS} and has been well-studied in the context of algebraic graph theory (see \cite[Section 11]{GR}).  
For later use, we also recall the notion of switching of graphs. 

\begin{dfn}[{\cite[Section 11.5]{GR}}]\label{dfn:sw}
Let $G$ be a graph and let ${\scS} \subset V(G)$. 
We define the graph $G^{\scS}$ on the vertex set $V(G^{\scS})=V(G)$ with the edge set 
\begin{align*}
E(G^{\scS})=
&\ (\{uv \mid u \in {\scS} \text{ and }v \not\in {\scS}, \text{ or }u \not\in {\scS} \text{ and }v \in {\scS}\} \setminus E(G)) \\
&\ \cup (\{uv \mid u \in {\scS} \text{ and }v \in {\scS}, \text{ or }u \not\in {\scS} \text{ and }v \not\in {\scS}\} \cap E(G)). 
\end{align*}
That is, $G^{\scS}$ is obtained by changing all the edges between ${\scS}$ and $V(G) \setminus {\scS}$ to nonedges, 
and all nonedges between ${\scS}$ and $V(G) \setminus {\scS}$ to edges. 
This operation is called \emph{switching of $G$} on ${\scS}$.
We say that two graphs $G$ and $G'$ are \emph{switching equivalent} if $G'$ is isomorphic to $G^{\scS}$ for some ${\scS} \subset V(G)$.
\end{dfn}

\begin{rem}
\begin{enumerate}
\item $G^{\scS}=G^{V(G) \setminus {\scS}}$ holds.
\item For ${\scS}=\{v_1,\ldots,v_m\} \subset V(G)$, we see that $$G^{\scS}=\mu_{v_m}(\cdots\mu_{v_2}(\mu_{v_1}(G))\cdots).$$
Note that this is independent of the order of $v_i$'s. 
Therefore, mutation equivalence and switching equivalence are the same.
\end{enumerate}
\end{rem}

\begin{ex}\label{ex.mu}
\begin{enumerate}
\item
\[G= \xy /r2pc/: 
{\xypolygon5{~={90}~*{\xypolynode}~>{}}},
"1";"3"**@{-},
"1";"5"**@{-},
"2";"3"**@{-},
"3";"4"**@{-},
"3";"5"**@{-},
\endxy
\quad \Longrightarrow  \quad
\mu_{1}(G) = \xy /r2pc/: 
{\xypolygon5{~={90}~*{\xypolynode}~>{}}},
"1";"2"**@{-},
"1";"4"**@{-},
"2";"3"**@{-},
"3";"4"**@{-},
"3";"5"**@{-},
\endxy
\]
\item
\[G= \xy /r2pc/: 
{\xypolygon5{~={90}~*{\xypolynode}~>{}}},
"1";"2"**@{-},
"1";"5"**@{-},
"2";"3"**@{-},
"2";"4"**@{-},
"5";"3"**@{-},
"5";"4"**@{-},
\endxy
\quad \Longrightarrow  \quad
G^{\{3,4\}}=
\mu_{4}(\mu_{3}(G))= \mu_{3}(\mu_{4}(G))= \xy /r2pc/: 
{\xypolygon5{~={90}~*{\xypolynode}~>{}}},
"1";"2"**@{-},
"1";"3"**@{-},
"1";"4"**@{-},
"1";"5"**@{-},
\endxy
\]
\item
\[G= \xy /r2pc/: 
{\xypolygon6{~={90}~*{\xypolynode}~>{}}},
"1";"2"**@{-},
"1";"4"**@{-},
"1";"6"**@{-},
"6";"5"**@{-},
"6";"3"**@{-},
"2";"3"**@{-},
"2";"5"**@{-},
"3";"4"**@{-},
"5";"4"**@{-},
\endxy
\quad \Longrightarrow  \quad
G^{\{1,2,3\}}=G^{\{4,5,6\}}=
\xy /r2pc/: 
{\xypolygon6{~={90}~*{\xypolynode}~>{}}},
"1";"2"**@{-},
"2";"4"**@{-},
"4";"5"**@{-},
"1";"5"**@{-},
"2";"3"**@{-},
"3";"5"**@{-},
"5";"6"**@{-},
"6";"2"**@{-},
\endxy
\]
\end{enumerate}
\end{ex}

For a given graph $G$, we can define a new graph $\Sw(G)$, called the switching graph of $G$.

\begin{dfn}[{\cite[Section 11.5]{GR}}]\label{dfn:swg}
Let $G$ be a graph.  Then the \emph{switching graph of $G$}, denoted by $\Sw(G)$, is the graph on the vertex set $V(\Sw(G))=V(G) \times \{0,1\}$ with edges defined as follows:
\begin{itemize}
\item $(u,0)$ and $(v,0)$ are adjacent if and only if $uv \in E(G)$, 
\item $(u,1)$ and $(v,1)$ are adjacent if and only if $uv \in E(G)$, 
\item $(u,0)$ and $(v,1)$ are adjacent if and only if $uv \not\in E(G)$. 
\end{itemize}
\end{dfn}

\begin{ex}
\[G=
\xygraph{
!{<0cm,0cm>;<1cm,0cm>:<0cm,1cm>::}
!{(0,0) }*+{2}="2"
!{(1,1) }*+{1}="1"
!{(1,-1) }*+{3}="3"
"1"-"2" "2"-"3"
} 
\quad \Longrightarrow  \quad
\Sw(G)=
\xygraph{
!{<0cm,0cm>;<1cm,0cm>:<0cm,1cm>::}
!{(0,0) }*+{(2,0)}="2.0"
!{(1,1) }*+{(1,0)}="1.0"
!{(1,-1) }*+{(3,0)}="3.0"
!{(4,0) }*+{(2,1)}="2.1"
!{(3,1) }*+{(1,1)}="1.1"
!{(3,-1) }*+{(3,1)}="3.1"
"1.0"-"2.0" "2.0"-"3.0"
"1.1"-"2.1" "2.1"-"3.1"
"1.0"-"1.1" "2.0"-"2.1" "3.0"-"3.1"
"1.0"-"3.1" "1.1"-"3.0"
} 
\]
\end{ex}

The following characterization of switching equivalence is known.

\begin{prop}[{See \cite[Section 11.5]{GR}}]\label{prop:switch}
Two graphs $G$ and $G'$ are switching equivalent if and only if $\Sw(G)$ and $\Sw(G')$ are isomorphic as graphs. 
\end{prop}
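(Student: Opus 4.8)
The plan is to prove both directions of the equivalence, with the ``only if'' direction being the more conceptual one. First I would settle the ``if'' direction: suppose $\Sw(G) \cong \Sw(G')$ via a graph isomorphism, and deduce that $G$ and $G'$ are switching equivalent. The key observation is that $\Sw(G)$ carries a canonical involution $\iota$ sending each vertex $(v,\varepsilon)$ to $(v, 1-\varepsilon)$, and that the quotient structure recovers $G$ up to switching. More precisely, for any fixed $\scS \subset V(G)$, the map $(v,\varepsilon) \mapsto (v, \varepsilon + \mathbf{1}_{\scS}(v) \bmod 2)$ is a graph automorphism-type reindexing that realizes $G^{\scS}$ as ``the $0$-layer'' of the same switching graph. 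So each vertex $v$ of $G$ corresponds to an antipodal pair $\{(v,0),(v,1)\}$ in $\Sw(G)$, and a choice of one element from each pair (a transversal) recovers a graph switching-equivalent to $G$. An isomorphism $\Sw(G) \to \Sw(G')$ must send antipodal pairs to antipodal pairs (I would verify this is forced by the combinatorial structure), hence induces a bijection $V(G) \to V(G')$; choosing compatible transversals then exhibits a switching equivalence.

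For the ``only if'' direction, I would start from $G' \cong G^{\scS}$ for some $\scS \subset V(G)$ and show $\Sw(G^{\scS}) \cong \Sw(G)$. Since isomorphism of graphs obviously preserves $\Sw$, it suffices to treat a single switching $G^{\scS}$, and by the remark that $G^{\scS} = \mu_{v_m}(\cdots \mu_{v_1}(G)\cdots)$ it even suffices to handle a single mutation $\mu_v(G)$, then iterate. The natural candidate isomorphism $\Phi \colon \Sw(G) \to \Sw(G^{\scS})$ is
\[
\Phi(v, \varepsilon) = \bigl(v,\ \varepsilon + \mathbf{1}_{\scS}(v) \bmod 2\bigr),
\]
which permutes the layers at vertices in $\scS$ while fixing the rest. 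The core of the argument is a case check that $\Phi$ preserves adjacency: for a pair $\{u,v\}$ one compares the three defining conditions of Definition \ref{dfn:swg} in $\Sw(G)$ against those in $\Sw(G^{\scS})$, using that $uv \in E(G^{\scS})$ differs from $uv \in E(G)$ exactly when exactly one of $u,v$ lies in $\scS$. The sign flips introduced by $\Phi$ on the second coordinate precisely cancel the edge/nonedge flips introduced by the switching, so adjacency is preserved in all cases.

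The anticipated main obstacle is the ``only if'' adjacency verification: it splits into cases according to whether each of $u,v$ lies in $\scS$ and whether the layer indices $\varepsilon_u, \varepsilon_v$ are equal. In the case where exactly one endpoint is in $\scS$, both the edge relation of $G$ and the layer parity get toggled, and one must confirm that these two toggles cancel so that the adjacency condition of Definition \ref{dfn:swg} is preserved; in the cases where both or neither endpoint is in $\scS$, $\Phi$ either fixes both layers or flips both, and one checks directly that the relevant condition among the three in Definition \ref{dfn:swg} is matched. This is routine but must be done carefully because the third condition of Definition \ref{dfn:swg} (which governs adjacency between the $0$-layer and the $1$-layer) is the one where a parity flip changes which of the three cases applies.

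For the ``if'' direction the subtle point is showing that any isomorphism $\Sw(G)\to\Sw(G')$ respects the antipodal pairing. I would establish this by characterizing the pairing intrinsically: the two vertices $(v,0),(v,1)$ in a pair have the property that every other vertex is adjacent to exactly one of them, so the pair $\{(v,0),(v,1)\}$ is the unique ``antipodal partner'' of $(v,0)$, a property preserved by any graph isomorphism. Granting this, the rest is bookkeeping: the induced bijection on pairs gives $f\colon V(G)\to V(G')$, and reading off on which vertices the layer index is flipped determines the switching set $\scS$ certifying that $G'$ is switching equivalent to $G$ via $f$.
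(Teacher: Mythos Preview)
The paper does not supply its own proof of this proposition; it simply cites \cite[Section 11.5]{GR}. So there is no in-paper argument to compare against, and your proposal should be judged on its own.

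Your ``only if'' direction is correct: the bijection $\Phi(v,\varepsilon)=(v,\varepsilon+\mathbf{1}_{\scS}(v))$ is a graph isomorphism $\Sw(G)\to\Sw(G^{\scS})$, and the parity bookkeeping you describe goes through cleanly. (One can phrase it uniformly: for $u\neq v$, $(u,a)$ and $(v,b)$ are adjacent in $\Sw(G)$ iff $a+b+[uv\in E(G)]$ is odd, and this quantity is unchanged modulo $2$ under $\Phi$ because the switching flips $[uv\in E(\cdot)]$ exactly when $\mathbf{1}_{\scS}(u)+\mathbf{1}_{\scS}(v)$ is odd.)

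The ``if'' direction, however, has a genuine gap. Your plan hinges on the claim that the antipodal pairing $\{(v,0),(v,1)\}$ is intrinsically determined by the property ``every other vertex is adjacent to exactly one of the pair.'' This is false. Take $G$ to be the edgeless graph on three vertices (or, equivalently up to switching, the path on three vertices); then $\Sw(G)\cong K_{3,3}$, and in $K_{3,3}$ \emph{every} adjacent pair has the property that each remaining vertex is adjacent to exactly one of them. Consequently there are automorphisms of $\Sw(G)$ that do \emph{not} send antipodal pairs to antipodal pairs, so an arbitrary isomorphism $\Sw(G)\to\Sw(G')$ need not respect the pairing, and your transversal argument cannot be carried out as stated.

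A clean fix, which is essentially the argument in \cite{GR}, avoids the pairing entirely: show that for every vertex $p$ of $\Sw(G)$, the induced subgraph on the neighborhood $N_{\Sw(G)}(p)$ is isomorphic to some switching $G^{\scS}$ of $G$. (Concretely, $N_{\Sw(G)}((v,0))$ is naturally identified with $V(G)$ and the induced graph is $G^{V(G)\setminus N_G(v)}$, in which $v$ becomes isolated.) Then if $\psi\colon\Sw(G)\to\Sw(G')$ is an isomorphism, pick any vertex $p$; the induced subgraphs on $N(p)$ and $N(\psi(p))$ are isomorphic, the former is switching-equivalent to $G$, the latter to $G'$, and hence $G$ and $G'$ are switching equivalent.
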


Classifying graphs up to switching equivalence is closely related to classifying $(\pm 1)$-skew polynomial algebras, as follows.

\begin{prop}[{\cite[Lemma 6.5]{MU}}]\label{prop:mu}
Let $S_{\e}$ and $S_{\e'}$ be standard graded $(\pm 1)$-skew polynomial algebras.
If the graphs $G_{\e}$ and $G_{\e'}$ are switching equivalent, then 
the categories $\GrMod S_{\e}$ and $ \GrMod S_{\e'}$ are equivalent.
\end{prop}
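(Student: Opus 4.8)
The plan is to reduce the statement to a single switching operation and then realize that switching as a Zhang twist of the algebra, so that the equivalence of graded module categories follows from Zhang's twisting theorem \cite{Z}. First I would unwind the hypothesis: by Definition \ref{dfn:sw}, $G_\e$ and $G_{\e'}$ being switching equivalent means that $G_{\e'}$ is isomorphic to $G_\e^{\scS}$ for a single subset $\scS \subseteq V(G_\e)=[n]$. Introduce signs $\lambda_k = -1$ for $k \in \scS$ and $\lambda_k = 1$ for $k \notin \scS$, and define the matrix $\e^{\scS}$ by $(\e^{\scS})_{ij} = \lambda_i \lambda_j\, \e_{ij}$. Since $\lambda_i\lambda_j = -1$ precisely when exactly one of $i,j$ lies in $\scS$, one checks immediately that $G_{\e^{\scS}} = G_\e^{\scS}$. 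Because $G_{\e'}\cong G_{\e^{\scS}}$, Proposition \ref{prop:giso} gives $S_{\e'}\cong S_{\e^{\scS}}$ as graded algebras, hence $\GrMod S_{\e'}\simeq \GrMod S_{\e^{\scS}}$. It therefore suffices to prove $\GrMod S_\e \simeq \GrMod S_{\e^{\scS}}$.

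To do this I would exhibit $S_{\e^{\scS}}$ as a Zhang twist of $S_\e$. Consider the degree-preserving linear map $\sigma$ on $S_\e$ determined by $\sigma(x_i) = \lambda_i x_i$. A short check that $\sigma(x_ix_j - \e_{ij}x_jx_i) = \lambda_i\lambda_j(x_ix_j - \e_{ij}x_jx_i)$ lies in the defining ideal shows that $\sigma$ is a well-defined graded algebra automorphism of $S_\e$. Now form the twisting system $\tau = \{\sigma^{d}\}_{d\in\ZZ}$ and the associated twisted algebra $S_\e^{\tau}$, whose product on homogeneous elements is $a * b = a\,\sigma^{\deg a}(b)$. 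For the degree-one generators this yields $x_i * x_j = x_i\,\sigma(x_j) = \lambda_j\, x_i x_j$ and $x_j * x_i = \lambda_i\, x_j x_i$, so that in $S_\e^{\tau}$ one has $x_i * x_j = \lambda_i\lambda_j\,\e_{ij}\,(x_j * x_i) = (\e^{\scS})_{ij}\,(x_j * x_i)$. Thus $S_\e^{\tau}$ is generated in degree one subject to exactly the $(\pm 1)$-skew relations of $S_{\e^{\scS}}$, and I would identify $S_\e^{\tau} \cong S_{\e^{\scS}}$.

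Finally, since $\tau$ is the twisting system attached to a graded algebra automorphism, Zhang's theorem \cite{Z} yields an equivalence $\GrMod S_\e \simeq \GrMod S_\e^{\tau} \simeq \GrMod S_{\e^{\scS}}$, and combined with the reduction above this gives $\GrMod S_\e \simeq \GrMod S_{\e'}$. The step I expect to be the main obstacle is the middle one: verifying that $\tau = \{\sigma^d\}$ genuinely satisfies the twisting-system axioms of Zhang and, more importantly, that the twisted multiplication produces exactly the presented algebra $S_{\e^{\scS}}$ and no more --- that is, that $S_\e^{\tau}$ is really $S_{\e^{\scS}}$ rather than a proper quotient or a larger algebra. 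This amounts to checking that twisting preserves the quadratic presentation and the Hilbert series, after which the identification and the conclusion are immediate.
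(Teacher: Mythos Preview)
Your argument is correct, and it is the standard Zhang-twist proof of this fact. Note, however, that the paper does not actually prove Proposition~\ref{prop:mu}: it simply quotes \cite[Lemma 6.5]{MU} and moves on. So there is no ``paper's own proof'' to compare against here, only the cited reference. Your approach---diagonal automorphism $\sigma(x_i)=\lambda_i x_i$ with $\lambda_i=-1$ for $i\in\scS$, twisting system $\{\sigma^d\}$, and Zhang's theorem \cite{Z}---is exactly the argument one expects in \cite{MU}, and your computation of the twisted relations is correct.

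Regarding your stated obstacle: it is not a real obstacle. A Zhang twist does not change the underlying graded $k$-vector space, so $S_\e^{\tau}$ has the same Hilbert series $(1-t)^{-n}$ as $S_\e$. You have shown that the degree-one generators of $S_\e^{\tau}$ satisfy the quadratic relations of $S_{\e^{\scS}}$, so there is a surjective graded algebra map $S_{\e^{\scS}}\twoheadrightarrow S_\e^{\tau}$. Since both sides have Hilbert series $(1-t)^{-n}$, this surjection is an isomorphism. The twisting-system axioms for $\{\sigma^d\}_{d\in\ZZ}$ are automatic whenever $\sigma$ is a graded algebra automorphism, which you have already checked.
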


Next, let us recall what an abstract simplicial complex is and some related notions.
We say that a collection $\Delta$ of subsets of a finite set $V$ is an (abstract) \emph{simplicial complex} on the vertex set $V$ 
if the following conditions are satisfied:
\begin{itemize}
\item $\{v\} \in \Delta$ for each $v \in V$,
\item $F \in \Delta$ and $F' \subset F$ imply $F' \in \Delta$. 
\end{itemize}
The \emph{dimension of $F \in \Delta$}, denoted by $\dim F$, is defined to be $|F|-1$ and we define $\dim \Delta=\max\{\dim F \mid F \in \Delta\}$. 
Let $\calF(\Delta)$ denote the set of all facets (i.e. maximal faces) of $\Delta$. 
For two simplicial complexes $\Delta$ and $\Delta'$ on the same vertex set $V$, 
we say that $\Delta$ and $\Delta'$ are \emph{isomorphic} as simplicial complexes if there exists a permutation $\sigma$ on $V$ 
which induces a bijection between $\calF(\Delta)$ and $\calF(\Delta')$. 

Then, we define the simplicial complex associated to a standard graded $(\pm 1)$-skew polynomial algebra $S_{\e}$.
Let $G_\e$ be the graph associated to $S_{\e}$ as in Definition \ref{dfn:g}.

\begin{dfn}\label{dfn:Del}
We define the simplicial complex $\Delta_\e$ on $V(G_\e)=[n]$ as follows. 
A subset $F \subset V( G_\e) $ belongs to $\Delta_\e$ if and only if 
for any $\{a,b,c\} \subset F$ where $a,b,c$ are all distinct, one of the following two conditions is satisfied: 
\begin{equation}\label{eq:Del}
\begin{split}
&ab \in E(G_\e), \; ac \in E(G_\e)\text{ and } bc \in E(G_\e), \text{ or}\\
&ab \in E(G_\e), \; ac \not\in E(G_\e)\text{ and }bc \not\in E(G_\e) 
\text{ after replacing $a,b,c$ if necessary.} 
\end{split}
\end{equation}
\end{dfn}

On the other hand, we define the ideal $I_{\e}$ of the commutative polynomial ring $k[x_1,\ldots,x_n]$ as follows:
\begin{align*}
I_{\e}=
&\ (x_ax_bx_c \mid ab \not\in E(G_\e) \text{ and }ac \not\in E(G_\e) \text{ and } bc \not\in E(G_\e)) \\
&\ +(x_ax_bx_c \mid ab \in E(G_\e) \text{ and }ac \in E(G_\e) \text{ and } bc \not\in E(G_\e)). 
\end{align*}
Then it follows from Theorem \ref{thm.ps} that $\Gamma_\e=\calV(I_{\e})$.
Since $I_\e$ is a square-free monomial ideal, we can associate the simplicial complex whose Stanley-Reisner ideal coincides with $I_{\e}$. 
We see that such simplicial complex is nothing but $\Delta_\e$ defined above. 
(For the fundamental materials on Stanley-Reisner ideals, please consult, e.g., \cite[Section 5]{BH}.) 

Note that the structure of the point variety $\Gamma_\e$ can be read from the combinatorial structure of $\Delta_\e$. 
In fact, the primary decomposition of $I_{\e}$ corresponds to the irreducible decomposition of $\Gamma_\e$ 
and the primary decomposition of a Stanley-Reisner ideal $I_{\e}$ can be obtained from the structure of the facets of $\Delta_\e$. 
More precisely, we have 
\[
I_{\e} = \bigcap_{F \in \calF(\Delta_\e)}P_F,
\]
where $P_F=( x_i \mid i \not\in F)$ is a prime ideal of $k[x_1,\ldots,x_n]$ associated to $F \in \Delta_\e$ 
(see \cite[Theorem 5.1.4]{BH}). Hence, we see that 
\begin{align}\label{eq:ps}
\Gamma_\e=\bigcup_{F \in \calF(\Delta_\e)}\PP(F),
\end{align}
that is,
\begin{align}\label{eq:calFD}
\calF(\Delta_\e)=\{F \subset [n] \mid \PP(F) \ \text{is an irreducible component of $\Gamma_\e$}\}.
\end{align}
In particular, we see that the dimension of the point variety $\Gamma_\e$ coincides with the dimension of the simplicial complex $\Delta_\e$. 

Now we are in the position to prove the following theorem.
\begin{thm}\label{thm:Deltom}
Let $S_{\e}$ and $S_{\e'}$ be standard graded $(\pm 1)$-skew polynomial algebras.
If $\Delta_\e$ and $\Delta_{\e'}$ are isomorphic as simplicial complexes, 
then $G_\e$ and $G_{\e'}$ are switching equivalent.
\end{thm}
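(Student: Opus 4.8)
The plan is to read the two-graph of $G_\e$ (the set of its odd triples) off the face structure of $\Delta_\e$, and then to invoke the classical fact that the two-graph is a complete invariant of the switching class.

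The key observation is what Definition \ref{dfn:Del} says on a three-element set. For $F=\{a,b,c\}$ the only triple to be tested is $F$ itself, and the two alternatives in \eqref{eq:Del} are ``three edges'' and ``exactly one edge'', i.e. an odd number of edges; the excluded configurations (no edge, or exactly two edges) are exactly the even ones. Hence
\[
\{a,b,c\}\in\Delta_\e \iff a,b,c \text{ span an odd number of edges of } G_\e .
\]
In other words, the set of two-dimensional faces of $\Delta_\e$ is exactly the set of odd triples of $G_\e$, that is, the two-graph associated with $G_\e$ in the sense of \cite[Section~11]{GR}.

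Next I would transport this along the isomorphism. By the definition of isomorphism of simplicial complexes, $\Delta_\e\cong\Delta_{\e'}$ is realized by a permutation $\sigma$ of the common vertex set $[n]$ (so in particular $n=n'$) carrying $\calF(\Delta_\e)$ onto $\calF(\Delta_{\e'})$. Since every face is a subset of some facet, $\sigma$ induces a cardinality-preserving bijection on all faces, hence a bijection between the two-dimensional faces of $\Delta_\e$ and those of $\Delta_{\e'}$. By the previous step this says precisely that $\sigma$ sends the odd triples of $G_\e$ to the odd triples of $G_{\e'}$; equivalently, the relabelled graph $\sigma(G_\e)$ and $G_{\e'}$ have the same two-graph on $[n]$.

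Finally I would invoke the two-graph theorem: two graphs on the same vertex set are switching equivalent if and only if they have the same set of odd triples. One direction is immediate, since switching on a set $\scS$ toggles, inside any triple $T$, either $0$ or $2$ edges (according as $|T\cap\scS|\in\{0,3\}$ or $\{1,2\}$), so parities—and hence odd triples—are preserved; for the converse one fixes a vertex $v$ and recovers the unique representative of the switching class in which $v$ is isolated by declaring $wx$ to be an edge exactly when $\{v,w,x\}$ is odd. Applied to $\sigma(G_\e)$ and $G_{\e'}$ this yields that they are switching equivalent, and since $G_\e\cong\sigma(G_\e)$ we conclude that $G_\e$ and $G_{\e'}$ are switching equivalent. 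I expect the only real obstacle to be the first step—the clean translation of \eqref{eq:Del} into the parity/odd-triple condition; once the two-dimensional faces of $\Delta_\e$ are identified with the two-graph of $G_\e$, the statement reduces to the standard characterization of switching classes by two-graphs, the rest being bookkeeping about faces.
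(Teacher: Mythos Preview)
Your proof is correct. The key insight—that the $2$-faces of $\Delta_\e$ are exactly the odd triples of $G_\e$, i.e.\ the two-graph of $G_\e$ in the sense of \cite[Section 11]{GR}—is spot on, and the reduction to the classical theorem that the two-graph is a complete invariant of the switching class is valid. Your check that a facet-bijection $\sigma$ extends to a face-bijection (hence to a bijection on $2$-faces) is also fine, since every face sits inside a facet and $\sigma$ is a permutation of the vertex set.

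The paper takes a more hands-on route. It starts from the same bijection $f$ on vertices, fixes a vertex $u$, and explicitly defines the switching set $\scS$ as the set of vertices where the $u$-adjacency in $G$ disagrees (through $f$) with that in $G'$; it then runs a full case analysis, using only the triple condition \eqref{eq:Del}, to verify $f$ gives a graph isomorphism $G^{\scS}\cong G'$. In effect the paper is reproving, in this concrete setting, the converse of the two-graph theorem that you simply cite: its $\scS$ is exactly the switching needed to align the neighbourhoods of $u$ and $f(u)$, and the case-by-case check that the remaining edges then agree is precisely the statement that two graphs with the same odd triples, once normalised at one vertex, coincide. Your argument is shorter and makes the connection to two-graph theory explicit (and incidentally shows that only the $2$-skeleton of $\Delta_\e$ is needed); the paper's argument is self-contained and avoids appealing to external graph-theoretic results.
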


\begin{proof}
For simplicity, we write $G=G_{\e}$ and $G'=G_{\e'}$.
Let $f:V(G) \rightarrow V(G')$ be a bijection which induces an isomorphism between $\Delta_\e$ and $\Delta_{\e'}$. 

Fix a vertex $u \in V(G)$. Let 
\begin{align*}
{\scS}=&\ \{ v \in V(G) \mid uv \in E(G) \text{ and }f(u)f(v) \not\in E(G')\} \\
&\ \cup \{ v \in V(G) \mid uv \not\in E(G) \text{ and }f(u)f(v) \in E(G')\}.
\end{align*}
Note that ${\scS}$ depends on the choice of $u$, and that $u \not\in {\scS}$. 
In what follows, we prove that $f$ induces an isomorphism between $G^{\scS}$ and $G'$ as graphs, i.e., $G$ and $G'$ are switching equivalent. 
To do so, what we have to do is to show that $ab \in E(G^{\scS})$ if and only if $f(a)f(b) \in E(G')$. 

Let $v \in V(G) \setminus \{u\}$. 
\begin{itemize}
\item Assume that $uv \in E(G)$. 
\begin{itemize}
\item If $v \in {\scS}$, then $uv \not\in E(G^{\scS})$ since $u \in V(G)\setminus {\scS}$, $v \in {\scS}$ and $uv \in E(G)$. 
On the other hand, by definition of ${\scS}$, we have $f(u)f(v) \not\in E(G')$. 
\item If $v \not\in {\scS}$, then $uv \in E(G^{\scS})$. On the other hand, by definition of ${\scS}$, we have $f(u)f(v) \in E(G')$. 
\end{itemize}
\item Assume that $uv \not\in E(G)$. 
\begin{itemize}
\item If $v \in {\scS}$, then $uv \in E(G^{\scS})$. On the other hand, we have $f(u)f(v) \in E(G')$. 
\item If $v \not\in {\scS}$, then $uv \not\in E(G^{\scS})$. On the other hand, we have $f(u)f(v) \not\in E(G')$. 
\end{itemize}
\end{itemize}

Let $v,w \in V(G) \setminus \{u\}$. 
\begin{itemize}
\item Consider the case $v,w \in {\scS}$. Then we see that $uv \in E(G)$ if and only if $f(u)f(v) \not\in E(G')$, and $uw \in E(G)$ if and only if $f(u)f(w) \not\in E(G')$.
Furthermore, we see that $vw \in E(G)$ if and only if $vw \in E(G^{\scS})$.

\begin{itemize}
\item Assume that $uv \in E(G)$ and $uw \in E(G)$. 

When $vw \in E(G)$, we have $\{u,v,w\} \in \Delta_\e$ by \eqref{eq:Del}. Hence, $\{f(u),f(v),f(w)\} \in \Delta_{\e'}$. 
Since $f(u)f(v) \not\in E(G')$ and $f(u)f(w) \not\in E(G')$ by $v,w \in {\scS}$, we conclude that $f(v)f(w) \in E(G')$ by \eqref{eq:Del}. 

When $vw \not\in E(G)$, we have $\{u,v,w\} \not\in \Delta_\e$ by \eqref{eq:Del}. Hence, $\{f(u),f(v),f(w)\} \not\in \Delta_{\e'}$. 
Since $f(u)f(v) \not\in E(G')$ and $f(u)f(w) \not\in E(G')$, we conclude that $f(v)f(w) \not\in E(G')$ by \eqref{eq:Del}. 

\item Assume that $uv \not\in E(G)$ and $uw \not\in E(G)$. 

When $vw \in E(G)$, we have $\{u,v,w\} \in \Delta_\e$ by \eqref{eq:Del}. Hence, $\{f(u),f(v),f(w)\} \in \Delta_{\e'}$. 
Since $f(u)f(v) \in E(G')$ and $f(u)f(w) \in E(G')$ by $v,w \in {\scS}$, we conclude that $f(v)f(w) \in E(G')$ by \eqref{eq:Del}. 

When $vw \not\in E(G)$, we have $\{u,v,w\} \not\in \Delta_\e$ by \eqref{eq:Del}. Hence, $\{f(u),f(v),f(w)\} \not\in \Delta_{\e'}$. 
Since $f(u)f(v) \in E(G')$ and $f(u)f(w) \in E(G')$, we conclude that $f(v)f(w) \not\in E(G')$ by \eqref{eq:Del}. 
\item Assume that $uv \in E(G)$ and $uw \not\in E(G)$. 

When $vw \in E(G)$, we have $\{u,v,w\} \not\in \Delta_\e$ by \eqref{eq:Del}. Hence, $\{f(u),f(v),f(w)\} \not\in \Delta_{\e'}$. 
Since $f(u)f(v) \not\in E(G')$ and $f(u)f(w) \in E(G')$ by $v,w \in {\scS}$, we conclude that $f(v)f(w) \in E(G')$ by \eqref{eq:Del}. 

When $vw \not\in E(G)$, we have $\{u,v,w\} \in \Delta_\e$ by \eqref{eq:Del}. Hence, $\{f(u),f(v),f(w)\} \in \Delta_{\e'}$. 
Since $f(u)f(v) \not\in E(G')$ and $f(u)f(w) \in E(G')$, we conclude that $f(v)f(w) \not\in E(G')$ by \eqref{eq:Del}. 
\end{itemize}
\item Consider the case $v,w \in V(G) \setminus {\scS}$. Then similar arguments to the above can be applied.
As a consequence, we obtain that the adjacency of $v$ and $w$ in $E(G^{\scS})$ coincides with the adjacency of $f(v)$ and $f(w)$ in $E(G')$. 
\item Even in the case where $v \in {\scS}$ and $w \in V(G) \setminus {\scS}$, by a routine work as above, we obtain the same conclusion. 
\end{itemize}

Therefore, we conclude that $ab \in E(G^{\scS})$ if and only if $f(a)f(b) \in E(G')$ for any $a,b \in V(G)$. 
\end{proof}

\section{Main Results}

In this section, we present the proof of the main theorem (Theorem \ref{thm:main}).
To do so, we first give the definition of $\overline{S_\e}$.

\begin{dfn}\label{dfn:overline}
Let $S_\e = k\langle x_1, \dots, x_n \rangle /(x_ix_j -\e_{ij} x_jx_i)$ be a $(\pm 1)$-skew polynomial algebra in $n$ variables.
From $S_\e$, we define a new standard graded $(\pm 1)$-skew polynomial algebra $\overline{S_\e}$ in $2n$ variables to be
the quotient of $k\langle x_1, \dots, x_n, y_1, \dots, y_n \rangle$ by the ideal generated by
\begin{align*}
&x_ix_j -\e_{ij} x_jx_i \quad  (1\leq i,j \leq n),\\
&y_iy_j -\e_{ij} y_jy_i \quad  (1\leq i,j \leq n),\\
&x_iy_j +\e_{ij} y_jx_i \quad  (1\leq i,j \leq n, i\neq j),\\
&x_iy_i - y_ix_i \quad  (1\leq i \leq n).
\end{align*}
\end{dfn}
It is easy to see that the graph associated to $\overline{S_\e}$ is $\Sw(G_\e)$.

Now we are ready to give the proof of the main theorem.

\begin{thm}\label{thm:main}
Let $S_{\e}$ and $S_{\e'}$ be standard graded $(\pm 1)$-skew polynomial algebras.
Then the following are equivalent.
\begin{enumerate}
\item $G_\e$ and $G_{\e'}$ are switching (or mutation) equivalent.
\item $\Sw(G_\e)$ and $\Sw(G_{\e'})$ are isomorphic as graphs.
\item $\overline{S_\e}$ and $\overline{S_{\e'}}$ are isomorphic as graded algebras.
\item $\GrMod S_\e$ and $\GrMod S_{\e'}$ are equivalent as categories.
\item $\Projn S_\e$ and $\Projn S_{\e'}$ are isomorphic as noncommutative projective schemes.
\item $\Gamma_\e$ and $\Gamma_{\e'}$ are isomorphic as varieties.
\item $\Delta_\e$ and $\Delta_{\e'}$ are isomorphic as simplicial complexes.
\end{enumerate}
\end{thm}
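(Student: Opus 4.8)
The plan is to prove the seven conditions equivalent by establishing the cycle $(1)\Rightarrow(4)\Rightarrow(5)\Rightarrow(6)\Rightarrow(7)\Rightarrow(1)$ together with the two side equivalences $(1)\Leftrightarrow(2)$ and $(2)\Leftrightarrow(3)$; these together force all seven statements into a single equivalence class. Most of the arrows along the cycle have already been supplied earlier in the paper, so the bulk of the work is bookkeeping, and the only genuinely new ingredient is the implication $(6)\Rightarrow(7)$.

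First I would dispatch the arrows that are immediate citations. The implication $(1)\Rightarrow(4)$ is exactly Proposition \ref{prop:mu}; $(4)\Rightarrow(5)$ is Theorem \ref{thm:z} (Zhang); $(5)\Rightarrow(6)$ is Theorem \ref{thm:mo} (Mori); and $(7)\Rightarrow(1)$ is Theorem \ref{thm:Deltom}. For the side equivalences, $(1)\Leftrightarrow(2)$ is Proposition \ref{prop:switch}. For $(2)\Leftrightarrow(3)$ I would use the observation following Definition \ref{dfn:overline} that the graph associated to $\overline{S_\e}$ is $\Sw(G_\e)$: since $\overline{S_\e}$ and $\overline{S_{\e'}}$ are themselves standard graded $(\pm1)$-skew polynomial algebras, Proposition \ref{prop:giso} applies to them and gives that $\overline{S_\e}\cong\overline{S_{\e'}}$ as graded algebras if and only if their associated graphs $\Sw(G_\e)$ and $\Sw(G_{\e'})$ are isomorphic.

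The one remaining and crucial step is $(6)\Rightarrow(7)$, and this is where I would spend the real effort. Suppose $\Gamma_\e\cong\Gamma_{\e'}$ as varieties. By the Stanley--Reisner correspondence recorded in \eqref{eq:ps} and \eqref{eq:calFD}, the facets of $\Delta_\e$ are precisely the subsets $F\subset[n]$ for which $\PP(F)$ is an irreducible component of $\Gamma_\e$; write the irreducible decomposition as $\Gamma_\e=\PP(X_1)\cup\cdots\cup\PP(X_r)$ with $\{X_1,\dots,X_r\}=\calF(\Delta_\e)$. Applying Lemma \ref{lem:psiso} to the isomorphism $\Gamma_\e\cong\Gamma_{\e'}$ yields $n=n'$ and a permutation $\sigma\in\mathfrak{S}_n$ with $\Gamma_{\e'}=\PP(\sigma(X_1))\cup\cdots\cup\PP(\sigma(X_r))$. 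Because $\sigma$ is a bijection, this is again an irredundant decomposition into irreducible components, so $\{\sigma(X_1),\dots,\sigma(X_r)\}=\calF(\Delta_{\e'})$. Thus $\sigma$ is a permutation of the common vertex set $[n]$ carrying $\calF(\Delta_\e)$ bijectively onto $\calF(\Delta_{\e'})$, which is exactly an isomorphism $\Delta_\e\cong\Delta_{\e'}$ of simplicial complexes.

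The main obstacle is precisely this $(6)\Rightarrow(7)$ argument: passing from a mere isomorphism of the underlying varieties to a single permutation of the vertex set that simultaneously matches all irreducible components. Lemma \ref{lem:psiso} is tailored to overcome it by tracking the dimensions of all the higher intersections $\PP(\bigcap_{i\in I}X_i)$, which pins down a compatible bijection of the index sets; once that lemma is in hand, reading the resulting $\sigma$ as an isomorphism of simplicial complexes is formal.
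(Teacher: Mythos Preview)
Your proposal is correct and follows essentially the same route as the paper: the identical cycle $(1)\Rightarrow(4)\Rightarrow(5)\Rightarrow(6)\Rightarrow(7)\Rightarrow(1)$ via Propositions~\ref{prop:mu}, Theorems~\ref{thm:z}, \ref{thm:mo}, \ref{thm:Deltom}, together with the side equivalences $(1)\Leftrightarrow(2)$ and $(2)\Leftrightarrow(3)$ via Propositions~\ref{prop:switch} and~\ref{prop:giso}. Your treatment of $(6)\Rightarrow(7)$ using Lemma~\ref{lem:psiso} and \eqref{eq:calFD} is in fact slightly more detailed than the paper's one-line version, but the argument is the same.
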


\begin{proof}
We check the following implications:
\[
\xymatrix@R=2pc@C=2pc{
(3) \ar@{<=>}[r]&(2)  \ar@{<=>}[r]&(1)  \ar@{=>}[r] &(4) \ar@{=>}[r]&(5) \ar@{=>}[r] &(6) \ar@{=>}[r] &(7) \ar@/^10pt/@<1ex>@{=>}[llll]
}\vspace{4mm}
\]

$(1) \Rightarrow (4):$ This is true by Proposition \ref{prop:mu}.

$(4) \Rightarrow (5):$ This follows from Theorem \ref{thm:z}.

$(5) \Rightarrow (6):$ This follows from Theorem \ref{thm:mo}.

$(6) \Rightarrow (7):$ By Lemma~\ref{lem:psiso}, there exists a permutation $\sigma \in \mathfrak{S}_n$ which induces a bijection between the coordinates of the irreducible components of $\Gamma_\e$ and $\Gamma_{\e'}$. By \eqref{eq:calFD}, we see that this $\sigma$ induces a bijection between $\calF(\Delta_\e)$ and $\calF(\Delta_{\e'})$. 

$(7) \Rightarrow (1):$ This implication is the content of Theorem \ref{thm:Deltom}.

$(1) \Leftrightarrow (2):$ This is just Proposition \ref{prop:switch}.

$(2) \Leftrightarrow (3):$ Since the graphs associated to $\overline{S_\e}$ and $\overline{S_{\e'}}$ are $\Sw(G_\e)$ and $\Sw(G_{\e'})$, respectively, the assertion follows from Proposition \ref{prop:giso}.
\end{proof}

In particular, by $(1) \Leftrightarrow (5)$ in Theorem \ref{thm:main}, we can completely classify $(\pm 1)$-skew projective spaces by a purely combinatorial method.

\begin{ex}[{\cite[Example 6.14]{MU}}] \label{ex:n=3}
Consider the case $n=3$. There exist two $(\pm 1)$-skew projective spaces $\Projn S_\e$ of dimension $2$ up to isomorphism.
In fact, there exist two graphs
\[\text{(\rnum 1)}\;\; \xy /r2pc/: 
{\xypolygon3{~={90}~*{\xypolynode}~>{}}},
"1";"2"**@{-},
\endxy
\qquad\qquad
\text{(\rnum 2)}\;\; \xy /r2pc/: 
{\xypolygon3{~={90}~*{\xypolynode}~>{}}},
\endxy
\qquad
\]
up to switching equivalence. The corresponding point varieties are
\begin{enumerate}
\item[(\rnum 1)] $\PP^2=\PP(1,2,3)$,
\item[(\rnum 2)] $\PP(2,3) \cup \PP(1,3) \cup \PP(1,2)$.
\end{enumerate}
\end{ex}

\begin{ex}[{\cite[Section 6.4.1]{MU}}]\label{ex:n=4}
Consider the case $n=4$. There exist three $(\pm 1)$-skew projective spaces $\Projn S_\e$ of dimension $3$ up to isomorphism.
In fact, there exist three graphs
\[\text{(\rnum 1)}\; \xy /r2pc/: 
{\xypolygon4{~={90}~*{\xypolynode}~>{}}},
"1";"2"**@{-},
"3";"4"**@{-},
\endxy
\qquad\quad
\text{(\rnum 2)}\; \xy /r2pc/: 
{\xypolygon4{~={90}~*{\xypolynode}~>{}}},
"1";"2"**@{-},
\endxy
\qquad\quad
\text{(\rnum 3)}\; \xy /r2pc/: 
{\xypolygon4{~={90}~*{\xypolynode}~>{}}},
\endxy
\]
up to switching equivalence. The corresponding point varieties are
\begin{enumerate}
\item[(\rnum 1)] $\PP^3=\PP(1,2,3,4)$,
\item[(\rnum 2)] $\PP(1,2,4) \cup \PP(1,2,3) \cup \PP(3,4)$,
\item[(\rnum 3)] $\PP(3,4) \cup \PP(2,4) \cup \PP(2,3) \cup \PP(1,4) \cup \PP(1,3) \cup \PP(1,2)$.
\end{enumerate}
\end{ex}

\begin{ex}[{\cite[Section 6.4.2]{MU}}]\label{ex:n=5}
Consider the case $n=5$. There exist seven $(\pm 1)$-skew projective spaces $\Projn S_\e$ of dimension $4$ up to isomorphism.
In fact, there exist seven graphs
\begin{center}
(\rnum 1) $\xy /r2pc/: 
{\xypolygon5{~={90}~*{\xypolynode}~>{}}},
"1";"2"**@{-},
"2";"3"**@{-},
"3";"1"**@{-},
"4";"5"**@{-},
\endxy $
\qquad 
(\rnum 2) $\xy /r2pc/: 
{\xypolygon5{~={90}~*{\xypolynode}~>{}}},
"1";"2"**@{-},
"2";"3"**@{-},
"4";"5"**@{-},
\endxy $
\qquad
(\rnum 3) $\xy /r2pc/: 
{\xypolygon5{~={90}~*{\xypolynode}~>{}}},
"1";"2"**@{-},
"3";"4"**@{-},
\endxy $
\qquad 
(\rnum 4) $\xy /r2pc/: 
{\xypolygon5{~={90}~*{\xypolynode}~>{}}},
"1";"2"**@{-},
"2";"3"**@{-},
"3";"4"**@{-},
\endxy $

(\rnum 5) $\xy /r2pc/: 
{\xypolygon5{~={90}~*{\xypolynode}~>{}}},
"1";"2"**@{-},
"2";"3"**@{-},
\endxy $
\qquad 
(\rnum 6) $\xy /r2pc/: 
{\xypolygon5{~={90}~*{\xypolynode}~>{}}},
"1";"2"**@{-},
\endxy$
\qquad 
(\rnum 7) $\xy /r2pc/: 
{\xypolygon5{~={90}~*{\xypolynode}~>{}}},
\endxy $
\end{center}
up to switching equivalence. The corresponding point varieties are
\begin{enumerate}
\item[(\rnum 1)] $\PP^4=\PP(1,2,3,4,5)$,
\item[(\rnum 2)] $\PP(2,3,4,5)\cup \PP(1,2,4,5)\cup \PP(1,3)$,
\item[(\rnum 3)] $\PP(1,2,3,4)\cup \PP(3,4,5)\cup \PP(1,2,5)$,
\item[(\rnum 4)] $\PP(3,4,5)\cup \PP(2,3,5)\cup \PP(1,3,4)\cup \PP(1,2,5)\cup \PP(1,2,4)$,
\item[(\rnum 5)] $\PP(2,3,5)\cup\PP(2,3,4)\cup \PP(1,2,5)\cup \PP(1,2,4)\cup\PP(4,5)\cup \PP(1,3)$,
\item[(\rnum 6)] $\PP(1,2,5)\cup \PP(1,2,4)\cup \PP(1,2,3)\cup \PP(4,5)\cup \PP(3,5)\cup \PP(3,4)$,
\item[(\rnum 7)] $\PP(4,5)\cup\PP(3,5)\cup\PP(3,4)\cup\PP(2,5)\cup\PP(2,4)\cup\PP(2,3)\cup\PP(1,5)\cup\PP(1,4)\cup\PP(1,3)\cup\PP(1,2)$.
\end{enumerate}
\end{ex}

\begin{rem}\label{rem:vi}
Note that the equivalence $(4) \Leftrightarrow (5)$ in Theorem \ref{thm:main} shows that
Vitoria's conjecture (Conjecture \ref{conj:vi}) holds for $(\pm 1)$-skew polynomial algebras.
\end{rem}

\begin{rem}\label{rem:gsp}
We remark that it follows from \cite[Example 4.10]{Mss} that $(6) \Rightarrow (4)$ in Theorem \ref{thm:main} does not hold for general skew polynomial algebras.
\end{rem}

\section{Some invariants}
In this section, we study some invariants for the classification obtained in Theorem \ref{thm:main}.
In particular, we discuss information about the irreducible components of the point variety $\Gamma_{\e}$ via the simplicial complex $\Delta_\e$ and the graph $G_\e$.
By \eqref{eq:ps}, the face structure of $\Delta_\e$ is crucial for analyzing of $\Gamma_\e$. 
To study the faces of $\Delta_\e$ using $G_\e$, we recall the notion of cliques of graphs.

Let $G$ be a graph. We say that $C \subset V(G)$ is a \emph{clique of $G$} if any two vertices in $C$ are adjacent. 
Note that the empty set $\emptyset$ is regarded as a clique of $G$. 
We call the maximum of the cardinalities of all cliques of $G$ the \emph{clique number of $G$}, and we denote it by $\omega(G)$.

We now discuss the face structure of $\Delta_\e$ in terms of cliques of $G_\e$ and $\Sw(G_\e)$. 
Let $\pi:V(\Sw(G_\e)) \rightarrow V(G_\e)$ be a map $\pi((u,0))=\pi((u,1))=u$ for each $u \in V(G_\e)$. 

\begin{thm}\label{thm:Delta}
Let $S_{\e}$ be a standard graded $(\pm 1)$-skew polynomial algebra in $n$ variables.
For $F \subset V(G_\e)=[n]$, the following conditions are equivalent.
\begin{enumerate}
\item $F \in \Delta_\e$. 
\item $F$ consists of two disjoint cliques that are not connected by an edge, 
i.e., $F=F' \sqcup F''$ where $F'$ and $F''$ are cliques of $G_\e$ such that there is no edge between $F'$ and $F''$.
\item $F=\pi(C)$ for some clique $C$ of $\Sw(G_\e)$. 
\end{enumerate}
\end{thm}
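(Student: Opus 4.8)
The plan is to establish the two equivalences $(1) \Leftrightarrow (2)$ and $(2) \Leftrightarrow (3)$ separately, which together give the full statement. The conceptual core is to reinterpret the defining condition \eqref{eq:Del}: a subset $F$ lies in $\Delta_\e$ exactly when every $3$-element subset $\{a,b,c\} \subseteq F$ spans either $3$ edges or $1$ edge of $G_\e$. In other words, membership in $\Delta_\e$ forbids both the independent triple ($0$ edges) and the ``path'' ($2$ edges) as induced configurations, and so I would first reformulate $(1)$ as the assertion that the induced subgraph $G_\e[F]$ is a disjoint union of at most two cliques.

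For $(1) \Rightarrow (2)$ I would introduce on $F$ the relation $u \sim v$ defined by $u = v$ or $uv \in E(G_\e)$. It is reflexive and symmetric by construction, and it is transitive precisely because of \eqref{eq:Del}: if $u,v,w \in F$ are distinct with $uv, vw \in E(G_\e)$, then $\{u,v,w\}$ already carries two edges, so \eqref{eq:Del} forces the third edge $uw \in E(G_\e)$. Hence $\sim$ is an equivalence relation whose classes are cliques of $G_\e$ with no edges between distinct classes. The number of classes is at most two, since three distinct classes would produce an independent triple, which \eqref{eq:Del} prohibits. Taking $F'$ and $F''$ to be the (at most two) classes yields $(2)$. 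The converse $(2) \Rightarrow (1)$ is immediate: in any triple drawn from $F = F' \sqcup F''$, two of the three vertices share a clique and are therefore adjacent, and a short case check on how the triple splits across $F'$ and $F''$ shows it spans $1$ or $3$ edges, so \eqref{eq:Del} holds.

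For $(2) \Leftrightarrow (3)$ I would argue by direct construction from Definition \ref{dfn:swg}. Given a clique $C$ of $\Sw(G_\e)$, set $C_i = \{u \mid (u,i) \in C\}$ for $i = 0,1$, so that $\pi(C) = C_0 \cup C_1$; the three adjacency rules then show at once that $C_0$ and $C_1$ are cliques of $G_\e$ and that $uv \notin E(G_\e)$ whenever $u \in C_0$, $v \in C_1$ with $u \neq v$. Conversely, from a decomposition $F = F' \sqcup F''$ as in $(2)$, the set $C = \{(u,0) \mid u \in F'\} \cup \{(v,1) \mid v \in F''\}$ is readily checked to be a clique of $\Sw(G_\e)$ with $\pi(C) = F$.

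I expect the main obstacle to be a degenerate case in $(3) \Rightarrow (2)$: a priori $C_0$ and $C_1$ need not be disjoint, whereas $(2)$ asks for a genuine disjoint decomposition. The resolution is that if some $u$ lies in $C_0 \cap C_1$, then both $(u,0)$ and $(u,1)$ belong to $C$, and no third vertex $(v,j)$ with $v \neq u$ can be adjacent to both of them (one of the two relevant adjacency conditions must fail), forcing $C = \{(u,0),(u,1)\}$ and hence $\pi(C) = \{u\}$, which trivially satisfies $(2)$. Once this overlap case is isolated, the remaining case has $C_0 \cap C_1 = \emptyset$, and then $F' = C_0$, $F'' = C_1$ furnishes the required decomposition.
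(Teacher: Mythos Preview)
Your proof is correct and follows the same overall architecture as the paper's (both establish $(1)\Leftrightarrow(2)$ and $(2)\Leftrightarrow(3)$ directly), but two local choices differ and are worth noting. For $(1)\Rightarrow(2)$ the paper extracts a maximal clique $F'\subset F$ and then argues separately that there are no edges from $F'$ to $F\setminus F'$ and that $F\setminus F'$ is itself a clique; your equivalence-relation formulation packages all of this at once, with transitivity encoding exactly the ``two edges force three'' case of \eqref{eq:Del} and the bound on the number of classes encoding the ``zero edges forbidden'' case. For $(3)\Rightarrow(2)$ the paper argues that $\pi(C')\cap\pi(C'')=\emptyset$ by claiming that if $u$ lay in the intersection then $(u,0)$ would be non-adjacent to every vertex of $C''$; this is literally false for the vertex $(u,1)\in C''$ itself, and the argument tacitly assumes $C$ contains some $(v,j)$ with $v\neq u$. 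Your treatment isolates the exceptional clique $C=\{(u,0),(u,1)\}$ explicitly and disposes of it, which is the cleaner way to close that case.
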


\begin{proof}
$(1) \Rightarrow (2)$: Take $F \in \Delta_\e$. 
Note that any subset $F$ of $V(G_\e)$ with $|F|=1$ or $2$ is a face of $\Delta_\e$ by \eqref{eq:Del}. 
Thus, we may assume that $|F| \geq 3$.
By \eqref{eq:Del}, we see that there is at least one edge among the vertices in $F$. 
Take a maximal clique $F' \subset F$ of $G_\e$ contained in $F$. Then $|F'| \geq 2$. 
When $F'=F$, we may regard $F$ as $F' \sqcup \emptyset$. 
Assume $F' \subsetneq F$. If there is an edge $uv \in E(G_\e)$ with $u \in F'$ and $v \in F \setminus F'$, 
then there is $u' \in F'$ with $u'v \not\in E(G_\e)$ by the maximality of $F'$, 
so we see that $\{u,u',v\}$ does not satisfy the condition \eqref{eq:Del}, a contradiction. 
Hence, there is no edge between $F'$ and $F \setminus F'$. 
Similarly, if $F \setminus F'$ is not a clique, then we can lead a contradiction.
Therefore, we conclude the assertion (2). 

$(2) \Rightarrow (1)$: Let $F$ be a union of two disjoint cliques that are not connected by an edge. 
Then it is straightforward to check that $F$ satisfies that any $\{a, b, c\} \subset F$ satisfies \eqref{eq:Del}. 

$(2) \Rightarrow (3)$: 
Let $F=F' \sqcup F''$ satisfying the condition in (2).
Since there is no edge between $F'$ and $F''$, 
we see that $C=\{(u,0) \mid u \in F'\} \cup \{(v,1) \mid v \in F''\}$ is a clique of $\Sw(G_\e)$ and $F=\pi(C)$. 

$(3) \Rightarrow (2)$:
Let $C=C' \sqcup C''$ with $C' \subset V(G_\e) \times \{0\}$ and $C'' \subset V(G_\e) \times \{1\}$. 
Note that $C'=\emptyset$ or $C''=\emptyset$ might happen. 
Then $C'$ and $C''$ are also cliques of $\Sw(G_\e)$, i.e., $\pi(C')$ and $\pi(C'')$ are cliques of $G_\e$. 
Take $u \in \pi(C') \cap \pi(C'')$ if exists.
Then we see that $(u,0)$ is adjacent to any vertex in $C' \subset V(G_\e) \times \{0\}$ since $u \in \pi(C')$ and $C'$ is a clique. 
Similarly, $(u,1)$ is adjacent to any vertex in $C'' \subset V(G_\e) \times \{1\}$. 
This means that $(u,0) \in C'$ is \textit{not} adjacent to any vertex in $C''$ by definition of $\Sw(G_\e)$. 
This is a contradiction to that $C' \sqcup C''$ is a clique of $\Sw(G_\e)$. Hence, $\pi(C') \cap \pi(C'')=\emptyset$.
Since $C$ is a clique of $\Sw(G_\e)$, it follows that there is no edge between $\pi(C')$ and $\pi(C'')$. 
Therefore, we conclude that $\pi(C)=\pi(C') \sqcup \pi(C'')$ is a required one.
\end{proof}

As corollaries of this theorem, we obtain the following. 
\begin{cor}
Let $F \subset V(G_\e)=[n]$. 
The following conditions are equivalent.
\begin{enumerate}
\item $\PP(F)$ is an irreducible component of $\Gamma_\e$.
\item $F \in \Delta_\e$ is a facet.
\item $F$ consists of two disjoint maximal cliques that are not connected by an edge, 
i.e., $F=F' \sqcup F''$ where $F'$ and $F''$ are maximal cliques of $G_\e$ such that there is no edge between $F'$ and $F''$.
\item $F=\pi(C)$ for some maximal clique $C$ of $\Sw(G_\e)$. 
\end{enumerate}
\end{cor}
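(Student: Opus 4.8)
The plan is to obtain this as a ``maximal'' refinement of Theorem~\ref{thm:Delta}, transferring the three equivalent descriptions of a face given there to the corresponding descriptions of a facet. The equivalence $(1)\Leftrightarrow(2)$ needs no new work: by \eqref{eq:calFD}, $\PP(F)$ is an irreducible component of $\Gamma_\e$ precisely when $F$ is a maximal element of $\Delta_\e$, that is, a facet. It therefore remains to see that, under the correspondences of Theorem~\ref{thm:Delta}, a face $F$ is maximal exactly when its decomposition $F=F'\sqcup F''$ uses maximal cliques of $G_\e$ and exactly when a clique $C$ of $\Sw(G_\e)$ with $F=\pi(C)$ is maximal. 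The guiding principle is that these correspondences should be compatible with the inclusion orders on faces and on cliques.

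I would first dispatch the direction $(2)\Rightarrow(4)$, which is clean. Given a facet $F$, Theorem~\ref{thm:Delta} provides a clique $C_0$ of $\Sw(G_\e)$ with $\pi(C_0)=F$; enlarging $C_0$ to a maximal clique $C\supseteq C_0$ yields $F=\pi(C_0)\subseteq\pi(C)$, and since $\pi(C)$ is again a face by Theorem~\ref{thm:Delta} while $F$ is a facet, we must have $\pi(C)=F$. For $(2)\Leftrightarrow(3)$ I would work with the characterization in part $(2)$ of Theorem~\ref{thm:Delta}: $F\in\Delta_\e$ iff $F=F'\sqcup F''$ for cliques of $G_\e$ with no edge between them. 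From this one reads off that a vertex $w\notin F$ can be appended to $F$ within $\Delta_\e$ exactly when $w$ is adjacent to all of one part and to none of the other; hence $F$ is a facet precisely when no such $w$ exists, which is the statement that neither part can be enlarged, i.e.\ that $F'$ and $F''$ are maximal cliques. The equivalence $(3)\Leftrightarrow(4)$ (and hence $(4)\Rightarrow(2)$) is then obtained by running the same enlargement analysis through the construction $C=\{(u,0)\mid u\in F'\}\cup\{(v,1)\mid v\in F''\}$ used in the proof of Theorem~\ref{thm:Delta}.

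The step I expect to be the main obstacle is making ``maximality'' pass faithfully across $\pi$, which is two-to-one on vertices and for which the decomposition $F=F'\sqcup F''$ is not unique. The delicate configurations are those in which one of the parts $F',F''$ is empty and, on the $\Sw(G_\e)$ side, the cliques of the form $\{(u,0),(u,1)\}$; in these cases a maximal clique or a maximally chosen decomposition need not visibly project to a facet, so one must pin down exactly which maximal cliques of $\Sw(G_\e)$ arise from facets and argue that the decomposition can always be selected compatibly. Once this matching of maximality on the two sides of $\pi$ is carried out carefully, the remaining implications follow routinely from the inclusion-compatibility of the correspondences in Theorem~\ref{thm:Delta}.
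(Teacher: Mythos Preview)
The paper offers no proof beyond presenting this as an immediate corollary of Theorem~\ref{thm:Delta}. Your outline for $(1)\Leftrightarrow(2)$ and $(2)\Rightarrow(4)$ is correct, and you have put your finger on exactly the right trouble spots. The difficulty is that these are not merely delicate cases to be ``carried out carefully'': they are genuine counterexamples to the corollary as literally stated, so no amount of care in the proof will close them.

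The gap already surfaces in your sketch of $(2)\Leftrightarrow(3)$. You assert that a decomposition $F=F'\sqcup F''$ is a facet precisely when $F'$ and $F''$ are maximal cliques of $G_\e$, but these are different conditions: the facet condition forbids a vertex $w\notin F$ adjacent to all of one part \emph{and to none of the other}, whereas maximality of $F'$ in $G_\e$ forbids any $w$ adjacent to all of $F'$ regardless of its edges to $F''$. For instance, take $V(G_\e)=\{1,2,3,4\}$ with edges $12,13,23,14$. Then $F=\{2,3,4\}=\{2,3\}\sqcup\{4\}$ is a facet (adding $1$ creates the bad triple $\{1,2,4\}$ with two edges), yet neither $\{2,3\}$ nor $\{4\}$ is a maximal clique of $G_\e$, since both enlarge by $1$; and the facet $\{1,2,3\}$ is itself a maximal clique, so its only decomposition uses $F''=\emptyset$, again not a maximal clique. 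Thus $(2)\Rightarrow(3)$ fails. On the $\Sw(G_\e)$ side the situation is the same: the clique $\{(u,0),(u,1)\}$ you flag is \emph{always} maximal, because any $(v,i)$ with $v\neq u$ is adjacent to exactly one of its two vertices, and it projects under $\pi$ to the singleton $\{u\}$, which is never a facet for $n\ge 2$. Hence $(4)\Rightarrow(2)$ also fails. The repair is to the statement rather than to the argument: in (3) one should read ``maximal'' relative to the constraint of being a clique with no edge to the other part (so that $\emptyset$ qualifies when appropriate), and in (4) one should exclude the diagonal maximal cliques $\{(u,0),(u,1)\}$; with those adjustments your outline goes through.
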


\begin{cor} \label{cor:dim} 
We have
\begin{align*}
&\dim \Gamma_\e \\
&=\dim \Delta_\e\\
&=\max\{ |F|-1 \mid F \text{ is a disjoint union of two cliques of $G_\e$ that are not connected by an edge} \} \\
&=\omega(\Sw(G_\e))-1.
\end{align*}
\end{cor}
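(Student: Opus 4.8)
The plan is to prove the chain of equalities in Corollary~\ref{cor:dim} by assembling results already established, without any new combinatorial work. The first equality, $\dim\Gamma_\e=\dim\Delta_\e$, was in fact noted directly after equation~\eqref{eq:ps}: the irreducible components of $\Gamma_\e$ are exactly the $\PP(F)$ for $F\in\calF(\Delta_\e)$, and $\dim\PP(F)=|F|-1=\dim F$, so the maximum over components matches $\dim\Delta_\e=\max\{\dim F\mid F\in\Delta_\e\}$. Thus I would simply cite \eqref{eq:ps} and the definition of $\dim\Delta_\e$.

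For the second equality I would invoke Theorem~\ref{thm:Delta}, specifically the equivalence $(1)\Leftrightarrow(2)$: a subset $F\subset[n]$ lies in $\Delta_\e$ if and only if $F$ is a disjoint union of two cliques of $G_\e$ not connected by an edge. Since $\dim\Delta_\e=\max\{|F|-1\mid F\in\Delta_\e\}$ by definition, this equivalence immediately identifies $\dim\Delta_\e$ with $\max\{|F|-1\mid F\text{ is a disjoint union of two cliques of }G_\e\text{ not connected by an edge}\}$. No estimate is needed here; it is a direct translation of the membership condition of $\Delta_\e$ through Theorem~\ref{thm:Delta}.

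For the final equality, $\dim\Delta_\e=\omega(\Sw(G_\e))-1$, I would use the equivalence $(1)\Leftrightarrow(3)$ of Theorem~\ref{thm:Delta}, which says that $F\in\Delta_\e$ if and only if $F=\pi(C)$ for some clique $C$ of $\Sw(G_\e)$. The key observation I must record is that $\pi$ is \emph{injective} on any clique $C$ of $\Sw(G_\e)$, so that $|\pi(C)|=|C|$; this is precisely the content of the argument in the $(3)\Rightarrow(2)$ part of the proof of Theorem~\ref{thm:Delta}, where one shows $\pi(C')\cap\pi(C'')=\emptyset$ for the decomposition $C=C'\sqcup C''$ into the two levels, forcing $\pi$ to be a bijection from $C$ onto $\pi(C)$. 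Granting this, the maximum of $|F|-1$ over $F\in\Delta_\e$ equals the maximum of $|C|-1$ over cliques $C$ of $\Sw(G_\e)$, which is exactly $\omega(\Sw(G_\e))-1$.

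The main (and only genuine) obstacle is making sure the cardinality is preserved when passing between $F$ and the clique $C$ of $\Sw(G_\e)$: a priori $\pi$ could collapse $(u,0)$ and $(u,1)$ to the same vertex $u$. I expect to handle this in one sentence by reusing the injectivity fact extracted from Theorem~\ref{thm:Delta}. Every other step is a formal substitution of an already-proved equivalence, so the proof reduces to stringing together three citations; the write-up should therefore be short, presenting the four quantities and justifying each adjacent pair by \eqref{eq:ps}, by $(1)\Leftrightarrow(2)$ of Theorem~\ref{thm:Delta}, and by $(1)\Leftrightarrow(3)$ of Theorem~\ref{thm:Delta} together with the observation that $\pi$ restricts to a bijection on cliques.
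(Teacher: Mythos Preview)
Your proposal is correct and matches the paper's approach: the paper states Corollary~\ref{cor:dim} as an immediate consequence of Theorem~\ref{thm:Delta} (together with the remark after~\eqref{eq:ps} that $\dim\Gamma_\e=\dim\Delta_\e$) and gives no further argument. Your write-up simply makes explicit the one nontrivial point---that $\pi$ is injective on any clique of $\Sw(G_\e)$, as shown in the $(3)\Rightarrow(2)$ step of Theorem~\ref{thm:Delta}---which is exactly what is needed and is implicit in the paper's treatment.
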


This corollary describes the relations between some invariants.

Here, we recall the notion of type of $\Gamma_\e$ introduced in \cite{BDL}.

\begin{dfn}[{\cite[Section 4.2]{BDL}}]
Let $\Gamma_\e$ be the point variety of a standard graded $(\pm 1)$-skew polynomial algebra $S_{\e}$ in $n$ variables.
The \emph{type of $\Gamma_\e$}, denoted by $\type \Gamma_\e$, is the vector $(t_{n-1},\dots,t_1) \in \NN^{n-1}$ where
\[
t_i = \#\{ X \subset [n] \mid \PP(X) \ \text{is an irreducible component of $\Gamma_\e$, and}\ |X|-1=i \}.
\]
\end{dfn}

\begin{ex}\label{ex:tn=4}
The types of the point varieties in Example \ref{ex:n=4} are
\begin{enumerate}
\item[(\rnum 1)] $(1,0,0)$,
\item[(\rnum 2)] $(0,2,1)$,
\item[(\rnum 3)] $(0,0,6)$.
\end{enumerate}
\end{ex}

\begin{ex}\label{ex:tn=5}
The types of the point varieties in Example \ref{ex:n=5} are
\begin{enumerate}
\item[(\rnum 1)] $(1,0,0,0)$,
\item[(\rnum 2)] $(0,2,0,1)$,
\item[(\rnum 3)] $(0,1,2,0)$,
\item[(\rnum 4)] $(0,0,5,0)$,
\item[(\rnum 5)] $(0,0,4,2)$,
\item[(\rnum 6)] $(0,0,3,3)$,
\item[(\rnum 7)] $(0,0,0,10)$.
\end{enumerate}
\end{ex}

Type is an invariant that is more precise than dimension. In fact, if $\type \Gamma_\e=(t_{n-1},\dots,t_1)$, then $\dim \Gamma_\e$ is just $\max \{i \mid t_i \neq 0 \}$. Note that the entry $t_1$ of $\type \Gamma_\e$ (i.e., the number of irreducible components of $\Gamma_\e$ that are isomorphic to $\PP^1$) was well-studied in \cite{HU}.

As long as we see Examples \ref{ex:tn=4}, \ref{ex:tn=5}, it is natural to ask the following question: Is $\type \Gamma_\e$ a complete invariant?
That is, if $\type\Gamma_\e=\type\Gamma_{\e'}$, then are $\Gamma_\e$ and $\Gamma_{\e'}$ isomorphic? Note that Belmans, De~Laet and Le~Bruyn \cite[Section 4.3]{BDL} pointed out that type is not a complete invariant for point varieties of skew (not necessarily $(\pm 1)$-skew) polynomial algebras.
From this, our question asks whether type is a complete invariant by restricting to ``$(\pm 1)$-skew'' (see Remark \ref{rem:psd}).

As we will see soon, when the graph $G_\e$ has a special form,
we can obtain a simple formula for calculating $\type \Gamma_\e$
from Theorem \ref{thm:Delta}.
Furthermore, using the formula, we can give an example, showing that our question has a negative answer, that is, type is not a complete invariant even after restricting to ``$(\pm 1)$-skew''.

Let $K_d$ denote the complete graph with $d$ vertices. 
A \emph{disjoint union $G \sqcup G'$ of graphs $G$ and $G'$} is the graph on the vertex set $V(G) \sqcup V(G')$ with the edge set $E(G) \sqcup E(G')$. 
We discuss the case of a disjoint union of complete graphs. 

\begin{cor}\label{cor:type}
Let $S_{\e}$ be a standard graded $(\pm 1)$-skew polynomial algebra in $n$ variables.
If $G_\e= K_{r_1} \sqcup  K_{r_2} \sqcup \cdots \sqcup K_{r_\ell}$, where $r_1 \leq r_2 \leq \cdots \leq r_\ell$, 
then $\type \Gamma_\e=(t_{n-1},\dots,t_1)$, 
where $$t_i=\#\{(a,b) \mid 1 \leq a<b \leq \ell, r_a+r_b-1=i\}.$$
In particular, $\dim \Gamma_\e=r_{\ell-1}+r_\ell-1$.
\end{cor}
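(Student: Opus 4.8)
The plan is to apply Theorem~\ref{thm:Delta} (and its facet-level refinement in the preceding Corollary) to the special graph $G_\e = K_{r_1} \sqcup \cdots \sqcup K_{r_\ell}$, reducing the computation of $\type \Gamma_\e$ to a combinatorial enumeration of facets of $\Delta_\e$. By equation~\eqref{eq:calFD}, the irreducible components $\PP(F)$ of $\Gamma_\e$ correspond exactly to the facets $F$ of $\Delta_\e$, and by the Corollary immediately before this one, such facets are precisely the sets $F = F' \sqcup F''$ where $F'$ and $F''$ are \emph{maximal} cliques of $G_\e$ with no edge between them. So the whole proof amounts to identifying these facets explicitly for a disjoint union of complete graphs.

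The key observation I would establish first is that the maximal cliques of $G_\e = K_{r_1} \sqcup \cdots \sqcup K_{r_\ell}$ are exactly the vertex sets $V(K_{r_a})$ of the individual complete-graph components, for $1 \leq a \leq \ell$ (together with the possibility $\emptyset$ when we allow one of the two blocks to be empty, as in the Corollary's statement $F' \sqcup F''$). Indeed, within a single $K_{r_a}$ every pair of vertices is adjacent, so $V(K_{r_a})$ is a clique; and since there are no edges between distinct components, no clique can meet two components, forcing maximality exactly at the full components. Next I would observe that two maximal cliques $V(K_{r_a})$ and $V(K_{r_b})$ are ``not connected by an edge'' if and only if $a \neq b$ (distinct components never have edges between them), whereas two vertices of the same component are always adjacent. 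Consequently a facet of $\Delta_\e$ of size $\geq 2$ from two nonempty blocks is exactly a union $V(K_{r_a}) \sqcup V(K_{r_b})$ with $a < b$, of cardinality $r_a + r_b$, hence of dimension $r_a + r_b - 1$.

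With this dictionary in place, counting is immediate: for each fixed $i$, the components $\PP(F)$ with $\dim \PP(F) = i$ (equivalently $|F| - 1 = i$, i.e. $|F| = i+1$) are in bijection with the unordered pairs $\{a,b\}$, $a < b$, satisfying $r_a + r_b - 1 = i$. This gives exactly $t_i = \#\{(a,b) \mid 1 \leq a < b \leq \ell,\ r_a + r_b - 1 = i\}$, as claimed. The final assertion $\dim \Gamma_\e = r_{\ell-1} + r_\ell - 1$ then follows either from Corollary~\ref{cor:dim} or directly: the dimension is the largest $i$ with $t_i \neq 0$, and since $r_1 \leq \cdots \leq r_\ell$, the sum $r_a + r_b$ over pairs $a < b$ is maximized by taking the two largest indices $a = \ell-1$, $b = \ell$.

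I do not anticipate a serious obstacle here, since the heavy lifting is already done by Theorem~\ref{thm:Delta} and its corollary; the one point requiring genuine care is the bookkeeping at the boundary, namely the facets coming from a single block written as $V(K_{r_a}) \sqcup \emptyset$. One must verify that such a single-block clique $V(K_{r_a})$ is a facet of $\Delta_\e$ \emph{only} when it cannot be enlarged by adjoining a disjoint block, i.e. only in degenerate cases (for instance $\ell = 1$); for $\ell \geq 2$ every maximal clique $V(K_{r_a})$ can be paired with some other block, so it is never itself a facet, and no spurious contribution to any $t_i$ arises from the $F'' = \emptyset$ case. Checking that this boundary accounting does not perturb the clean formula for $t_i$ is the only place where I would slow down and argue carefully.
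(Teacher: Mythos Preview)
Your proposal is correct and is exactly the intended unpacking of the paper's proof, which consists of the single sentence ``This follows easily from Theorem~\ref{thm:Delta}.'' You have simply made explicit the identification of maximal cliques of $K_{r_1}\sqcup\cdots\sqcup K_{r_\ell}$ with the component vertex sets and the ensuing enumeration of facets; your attention to the $F''=\emptyset$ boundary case (relevant only when $\ell=1$, where the stated formula and the ``In particular'' clause would both fail) is more care than the paper itself expends.
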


\begin{proof}
This follows easily from Theorem \ref{thm:Delta}.
\end{proof}

\begin{lem} \label{lem:kmut}
Let $G= K_{r_1} \sqcup  K_{r_2} \sqcup \cdots \sqcup K_{r_\ell}$ with $\ell \geq 3$ 
and let $G'= K_{s_1} \sqcup  K_{s_2} \sqcup \cdots \sqcup K_{s_m}$ with $m \geq 3$. 
If $G$ is switching equivalent to $G'$, then $G$ and $G'$ are isomorphic as graphs, i.e., 
$\ell=m$ and $\{r_1,\ldots,r_\ell\}=\{s_1,\ldots,s_m\}$ (as multi-sets). 
\end{lem}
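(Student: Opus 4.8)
The plan is to route through the main theorem and then recover the component partition of $G$ directly from the face structure of $\Delta_\e$, rather than through the (coarser) type invariant, which by itself does not determine the multiset of component sizes. First I would regard $G=G_\e$ and $G'=G_{\e'}$ for suitable $(\pm1)$-skew polynomial algebras $S_\e,S_{\e'}$. Since switching preserves the vertex set and isomorphisms preserve cardinality, we have $n=n'$, and the equivalence $(1)\Leftrightarrow(7)$ of Theorem \ref{thm:main} turns the switching equivalence of $G$ and $G'$ into a permutation $\sigma\in\mathfrak{S}_n$ inducing an isomorphism $\Delta_\e\cong\Delta_{\e'}$ of simplicial complexes; in particular $\sigma$ carries faces to faces and non-faces to non-faces.

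Next I would compute the size-$3$ faces of $\Delta_\e$ when $G_\e=K_{r_1}\sqcup\cdots\sqcup K_{r_\ell}$. Here $E(G_\e)$ consists precisely of the pairs of vertices lying in a common component, so for a triple $\{a,b,c\}$ the defining condition \eqref{eq:Del} holds exactly when the three vertices span at most two components: either all three lie in one component (three edges) or exactly two of them share a component (exactly one edge). Hence $\{a,b,c\}\notin\Delta_\e$ if and only if $a,b,c$ lie in three pairwise distinct components. This lets me reconstruct the partition of $[n]$ into components purely from $\Delta_\e$: declare $i\sim j$ to mean $\{i,j,k\}\in\Delta_\e$ for every $k\in[n]\setminus\{i,j\}$. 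If $i,j$ lie in the same component then each such triple is a face, so $i\sim j$; conversely, if $i,j$ lie in different components then, because $\ell\geq3$, there is a third component containing some vertex $k$, and then $\{i,j,k\}$ is a non-face, so $i\not\sim j$. Thus $\sim$ is exactly the relation ``lie in the same component of $G_\e$'', and its equivalence classes are the vertex sets of $K_{r_1},\dots,K_{r_\ell}$.

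Finally, since $\sim$ is phrased solely in terms of membership of triples in the simplicial complex, the isomorphism $\sigma$ preserves it: $i\sim j$ in $\Delta_\e$ if and only if $\sigma(i)\sim\sigma(j)$ in $\Delta_{\e'}$. Using $m\geq3$, the same description applies to $\Delta_{\e'}$, whose $\sim$-classes are the vertex sets of $K_{s_1},\dots,K_{s_m}$. Therefore $\sigma$ restricts to a bijection between the component partition of $G$ and that of $G'$ preserving block sizes, whence $\ell=m$ and $\{r_1,\dots,r_\ell\}=\{s_1,\dots,s_m\}$ as multisets, which is the assertion.

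The point to watch — and the reason the hypotheses $\ell,m\geq3$ are indispensable — is the existence of the third-component witness $k$ in the converse direction above: with only two components $\Delta_\e$ is the full simplex on $[n]$ and records nothing beyond $n$, so the sizes cannot be recovered (for instance $K_1\sqcup K_3$ and $K_2\sqcup K_2$ are then indistinguishable). One should also note that the argument is unaffected by singleton components $K_1$, since the reconstruction of $\sim$ never uses that a component carries an edge. I expect no further difficulty: the only genuinely combinatorial input is the triple computation, which is immediate from the description of $E(G_\e)$ for a disjoint union of complete graphs.
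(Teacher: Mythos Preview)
Your argument is correct, but it is genuinely different from the paper's proof. The paper argues directly at the level of graphs: writing $G=\bigsqcup_i G_i$ with $G_i\cong K_{r_i}$ and assuming $G^{\scS}\cong G'$, it shows by a short case analysis that any nontrivial $\scS$ forces $G^{\scS}$ to contain a connected component that is not complete (for instance, if $\scS$ meets $V(G_1)$ but misses vertices in two other components $G_2,G_3$, then the vertices in $(V(G_1)\cap\scS)\cup(V(G_2)\setminus\scS)\cup(V(G_3)\setminus\scS)$ lie in a single connected component of $G^{\scS}$ yet there is no edge between $V(G_2)\setminus\scS$ and $V(G_3)\setminus\scS$). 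Hence $\scS=\emptyset$ or $\scS=V(G)$, so $G^{\scS}=G$ and $G\cong G'$.

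Your route instead passes through the main theorem to obtain a simplicial isomorphism $\Delta_\e\cong\Delta_{\e'}$, and then reconstructs the component partition of $G$ purely from the $3$-element faces via the relation $i\sim j\iff\{i,j,k\}\in\Delta_\e$ for all $k$. This is clean, makes the role of the hypotheses $\ell,m\geq 3$ completely transparent (they guarantee the third-component witness $k$), and in fact shows a bit more: any simplicial-complex invariant of $\Delta_\e$ already sees the multiset $\{r_1,\dots,r_\ell\}$. The trade-off is that you are invoking the machinery of Theorem~\ref{thm:main} (whose $(1)\Rightarrow(7)$ direction passes through categories and point varieties), whereas the paper's proof is entirely self-contained graph combinatorics; if you want a lighter version of your argument, note that $(1)\Rightarrow(7)$ can also be checked directly by observing that switching at a single vertex preserves the parity of the number of edges in every triple, hence preserves condition~\eqref{eq:Del}.
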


\begin{proof}
For simplicity, let us write $G=\bigsqcup_{i=1}^\ell G_i$ (resp.\ $G'=\bigsqcup_{i=1}^m G_i'$) where $G_i$ (resp.\ $G_i'$) is isomorphic to $K_{r_i}$ (resp.\ $K_{s_i}$). 

Assume that $G$ is switching equivalent to $G'$. Then there is a subset ${\scS} \subset V(G)$ such that $G^{\scS}$ is isomorphic to $G'$ as graphs. 
In what follows, we show that ${\scS}=\emptyset$ or ${\scS}=V(G)$. 

Suppose that ${\scS} \neq \emptyset$. Then we may assume that ${\scS} \cap V(G_1) \neq \emptyset$.
\begin{itemize}
\item Suppose that there are at least two indices $i,j > 1$ such that $V(G_i) \cap {\scS} \subsetneq V(G_i)$ and $V(G_j) \cap {\scS} \subsetneq V(G_j)$. We may assume that $i=2$ and $j=3$. 
Then $G^{\scS}$ contains a connected component whose vertex set contains $(V(G_1) \cap {\scS}) \cup (V(G_2) \setminus {\scS}) \cup (V(G_3) \setminus {\scS})$. 
Since there is no edge between $V(G_2) \setminus {\scS}$ and $V(G_3) \setminus {\scS}$ in $G^{\scS}$, $G^{\scS}$ contains a non-complete connected component, a contradiction. 
Hence, there is at most one index $i>1$ with $V(G_i) \cap {\scS} \subsetneq V(G_i)$. We may assume that $i=2$ if such $i$ exists. Then $V(G_3) \cup \cdots \cup V(G_\ell) \subset {\scS}$. 
\item Suppose that $V(G_2) \cap {\scS} \subsetneq V(G_2)$. Then $G^{\scS}$ contains a non-complete connected component 
whose vertex set contains $(V(G_1) \cap {\scS}) \cup (V(G_2) \setminus {\scS}) \cup (V(G_3) \cap {\scS})$, a contradiction. Hence, $V(G_2) \subset {\scS}$. 
\item Suppose that $V(G_1) \cap {\scS} \subsetneq V(G_1)$. Then $G^{\scS}$ contains a non-complete connected component 
whose vertex set contains $(V(G_1) \setminus {\scS}) \cup (V(G_2) \cap {\scS}) \cup (V(G_3) \cap {\scS})$, a contradiction. Hence, $V(G_1) \subset {\scS}$. 
\end{itemize}
Therefore, $V(G_1) \cup \cdots \cup V(G_\ell) \subset {\scS}$, and hence ${\scS}=V(G)$, as required. 
\end{proof}

\begin{ex} \label{ex:cex}
Let $S_{\e}$ and $S_{\e'}$ be standard graded $(\pm 1)$-skew polynomial algebras such that
$G_\e=K_1 \sqcup K_3 \sqcup K_3 \sqcup K_3$ and $G_{\e'}=K_2 \sqcup K_2 \sqcup K_2 \sqcup K_4$. 
Then
\[ \type \Gamma_\e = (0,0,0,0,3,0,3,0,0) = \type \Gamma_{\e'}\]
by Corollary \ref{cor:type}.
On the other hand, $G_\e$ is not switching equivalent to $G_{\e'}$ by Lemma \ref{lem:kmut}.
Hence, it follows from Theorem \ref{thm:main} that $\Gamma_\e$ and $\Gamma_{\e'}$ are not isomorphic. 
\end{ex}
This example shows that $\type \Gamma_\e =\type \Gamma_{\e'}$ does not imply isomorphism of $\Gamma_\e$ and $\Gamma_{\e'}$.

\section*{Acknowledgments}
The second author thanks Pieter Belmans for helpful communications on point varieties of skew polynomial algebras.
The second author also thanks Izuru Mori for valuable comments on a draft of this paper.

The first author was supported by JSPS Grant-in-Aid for Scientific Research (C) 20K03513.
The second author was supported by JSPS Grant-in-Aid for Early-Career Scientists 18K13381.

\end{document}